\newtheorem{theorem}{Theorem}
\newtheorem{lemma}[theorem]{Lemma}
\newtheorem{corollary}[theorem]{Corollary}
\newtheorem{proposition}[theorem]{Proposition}
\newtheorem{definition}[theorem]{Definition}
\title{Improved bounds on the set $A(A+1)$}
\author{Timothy G. F. Jones\footnote{School of Mathematics, University of Bristol, University Walk,
 Bristol, BS8 1TW, United Kingdom. Email: tgf.jones@bristol.ac.uk} and Oliver Roche-Newton\footnote{School of Mathematics, University of Bristol, University Walk,
 Bristol, BS8 1TW, United Kingdom. Email: maorn@bristol.ac.uk}}
\date{}
\begin{document}
\maketitle

\begin{abstract}
For a subset $A$ of a field $F$, write $A(A+1)$ for the set $\left\{a(b+1):a,b \in A\right\}$. We establish new estimates on the size of $A(A+1)$ in the case where $F$ is either a finite field of prime order, or the real line. 

In the finite field case we show that $A(A+1)$ is of cardinality at least $C|A|^{57/56-o(1)}$ for some absolute constant $C$, so long as $|A|<p^{1/2}$. In the real case we show that the cardinality is at least $C|A|^{24/19-o(1)}$. These improve on the previously best-known exponents of $106/105-o(1)$ and $5/4$ respectively. 
\end{abstract}

\paragraph{Keywords} Sums-products, expander functions, additive combinatorics.

\section{Introduction}

Throughout this paper we use  $Y=O(X)$, $X=\Omega(Y)$ and $Y \ll X$ all to mean that there is an absolute constant $C$ with $Y\leq CX$. We also use $Y \lesssim X$ to mean that there is an absolute constant $c$ with $Y\ll \log(X)^{c} X$. 

The well-known sum-product phenomenon says that for a set $A \subseteq \mathbb{R}$, at least one of the sum set $A+A=\left\{a+b:a,b \in A\right\}$ and the product set $AA=\left\{ab:a,b \in A\right\}$ will be large. Here, `large' means  having cardinality $\Omega\left(|A|^{1+\delta}\right)$ for some absolute $\delta>0$. The best-known exponent when $A$ is a subset of $\mathbb{R}$ is $\delta=1/3-o(1)$, due to Solymosi \cite{solymosi1}, improving on previous work in \cite{elekes,erdosszemeredi,ford,nathanson,solymosi2}. The best-known exponent when $A$ is a subset of $\mathbb{F}_p$ is $\delta=1/11-o(1)$, due to Rudnev \cite{rudnev}, improving on \cite{BG,BKT,garaev,ks,li,shen}. This bound was extended to the general finite field $\mathbb{F}_q$ by by Li and the second-named author \cite{LiORN1}, following previous work of Katz and Shen \cite{KSgeneral}.

A variation on this theme is to find functions $f$ of two variables with the property that for any set $A$, the set $f(A)=\left\{f(a,b):a,b \in A\right\}$ has cardinality $\Omega\left(|A|^{1+\delta}\right)$. The study of such functions was initiated by Bourgain \cite{bourgain} in a finite field setting. The strongest results are due to Garaev and Shen \cite{GS}, who proved three results about the size of the set $A(A+1)$, depending on the ambient field and the density of $A$ within it:

\begin{align}
&\label{eq:gs1}\text{If } A \subseteq \mathbb{F}_p \text{ with } |A|<p^{1/2} \text{ then } |A(A+1)|\gg |A|^{106/105}\\
&\label{eq:gs2}\text{If } A \subseteq \mathbb{F}_p \text{ with } |A|\geq p^{2/3} \text{ then }|A(A+1)|\gg |A|^{1/2}p^{1/2}\\
&\label{eq:gs3}\text{If } A \subseteq \mathbb{R} \text { is finite then } |A(A+1)|\gg |A|^{5/4}.
\end{align}

Result (\ref{eq:gs2}) is sharp but (\ref{eq:gs1}) and (\ref{eq:gs3}) are not. This paper makes the following improvements on these cases:

\begin{theorem}\label{theorem:mainfinite}
For $A \subseteq \mathbb{F}_p$ with $|A|<p^{1/2}$ we have $\left|A(A+1)\right|\gtrsim |A|^{57/56}$.
\end{theorem}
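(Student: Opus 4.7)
The plan is to control $|A(A+1)|$ from below by bounding the multiplicative energy
\[
E_\times(A,A+1) := |\{(a_1,b_1,a_2,b_2) \in A^4 : a_1(b_1+1) = a_2(b_2+1)\}|
\]
from above, and then invoking Cauchy--Schwarz. Writing $r(x) := |\{(a,b) \in A^2 : a(b+1)=x\}|$, one has $\sum_x r(x) = |A|^2$ with support contained in $A(A+1)$, so Cauchy--Schwarz yields
\[
E_\times(A,A+1) \;=\; \sum_x r(x)^2 \;\geq\; \frac{|A|^4}{|A(A+1)|}.
\]
To recover the exponent $57/56$ it therefore suffices to prove $E_\times(A,A+1) \lesssim |A|^{3-1/56}$, i.e.\ a saving of $|A|^{1/56}$ over the trivial estimate $\sum r \cdot \max r \leq |A|^3$.

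To obtain such a bound I would first carry out a dyadic pigeonhole on the level sets of $r$, passing to a scale $\Delta$ and a set $S \subseteq A(A+1)$ with $r(x) \sim \Delta$ for $x \in S$ and $|S|\Delta^2 \gtrsim E_\times(A,A+1)/\log|A|$. Then I would reformulate the collision equation $a_1(b_1+1) = a_2(b_2+1)$, or more fruitfully its three-fold analogue $a_1(b_1+1) = a_2(b_2+1) = a_3(b_3+1)$, as a point-plane incidence problem in $\mathbb{F}_p^3$. The quadratic terms $a_ib_i$ are linearised by the substitution $c_i := a_ib_i$, which converts the system into affine equations $a_i+c_i$ constant, subject to the side constraints $c_i \in a_iA$. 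Choosing one group of variables to form a point set and the remaining variables to parameterise a plane family, Rudnev's point-plane incidence theorem then supplies an upper bound of the form $|P|^{1/2}|\Pi|+k|\Pi|$. The hypothesis $|A|<p^{1/2}$ is what one needs to keep all relevant point sets well below $p^2$ and to suppress the main-term $|P||\Pi|/p$.

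The principal obstacle, in my view, will be arranging the incidence geometry so as to avoid a \emph{pencil} of planes. A direct lift of $a_1(b_1+1) = a_2(b_2+1)$ produces a family of planes all containing a common line (reflecting the fact that the curves $y = a(x+1)$ in $\mathbb{F}_p^2$ are concurrent at $(-1,0)$); the collinear-points term $k|\Pi|$ in Rudnev's theorem then dominates and renders the estimate trivial. Breaking this degeneracy is likely to require working with the third moment $T_3 := \sum_x r(x)^3$, which comes with a second linearly independent equation that can be used to escape the pencil, together with a Balog--Szemer\'edi--Gowers-style refinement to a subset of $A$ on which the multiplicities of $r$ are controlled. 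Once a bound of the form $T_3 \lesssim |A|^{4-1/28}$ is secured, H\"older's inequality $|A|^6 \leq T_3 \cdot |A(A+1)|^2$ delivers the claimed exponent $57/56$.
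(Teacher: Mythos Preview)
Your proposal is a strategy sketch rather than a proof, and it diverges entirely from the paper's argument. The paper does \emph{not} use incidence geometry, energy dyadic level sets, or Rudnev's point--plane theorem in the finite-field part. Instead it runs the classical Bourgain--Katz--Tao style sum--product machine: one passes (via Corollary~\ref{theorem:horrific}) to a subset with controlled $|A-A|$, pigeonholes to find many $a\in A$ with $|a(A+1)\cap b_0(A+1)|$ large, considers the ratio set $R(A_1)=\{(\alpha-\beta)/(\gamma-\delta)\}$, and splits into the cases $R(A_1)=\mathbb{F}_p$ and $R(A_1)\neq\mathbb{F}_p$. The genuinely new ingredient is Proposition~\ref{theorem:idea}, a Ruzsa-type bound on the partial difference set $A\stackrel{G}{-}B$ in terms of $|A(B+1)|$, $|B(A+1)|$ and $|A/B|$, built on the identity that $ab=cd$ forces $(a+ad)-(b+bc)=a-b$. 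This proposition feeds a covering lemma (Corollary~\ref{theorem:energy}) that lets one absorb the dilates $\alpha A,\beta A,\gamma A,\delta A$ into translates of $b_0A$, and the exponent $57/56$ falls out of the resulting chain of Pl\"unnecke--Ruzsa inequalities.

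Turning to your plan itself, there is a genuine gap at its core. You correctly diagnose the pencil degeneracy --- all lines $y=a(x+1)$ meet at $(-1,0)$, which kills the main term of any point--plane bound --- but you do not actually resolve it; you only \emph{assert} that passing to the third moment and applying a BSG refinement ``is likely'' to break it. Two concrete problems remain. First, your linearisation $c_i:=a_ib_i$ does not place $c_i$ in a fixed product set: the constraint is $c_i\in a_iA$, a set that varies with $a_i$, so it is unclear what point set and plane family you are feeding into Rudnev's theorem, or why the non-degeneracy hypothesis (bounded collinearity) would hold. Second, the target bound $T_3\lesssim|A|^{4-1/28}$ is visibly reverse-engineered from the conclusion rather than derived; in fact, when point--plane incidences \emph{are} made to work for $A(A+1)$ (as in later literature), they yield exponents substantially better than $57/56$, so a correctly executed version of your approach would not land on this particular number. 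As written, the proposal identifies a plausible modern toolkit but does not supply the missing geometric construction or the energy estimate on which everything hinges.
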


\begin{theorem}\label{theorem:mainreal} For any finite set $A\subset{\mathbb{R}}$ we have $|A(A+1)|\gtrsim |A|^{24/19}.$
\end{theorem}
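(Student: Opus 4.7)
The idea is to extract three lower bounds on $M := |A(A+1)|$, each expressed in terms of $|A+A|$, and to optimize the resulting trade-off. By passing to a large positive (or negative) subset via pigeonhole we may assume $A \subset \mathbb{R}_{>0}$, and we set $B := \log A$ and $C := \log(A+1)$ so that $|B+C| = M$.

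For the first bound, let $N$ count the quadruples $(a,b,c,d) \in A^{4}$ with $a(b+1) = c(d+1)$; Cauchy--Schwarz gives $M \geq |A|^{4}/N$, and rewriting the defining equation as $a/c = (d+1)/(b+1)$ allows one more Cauchy--Schwarz in the ratio to obtain $N \leq \sqrt{E^{\times}(A)\,E^{\times}(A+1)}$. Solymosi's multiplicative-energy inequality $E^{\times}(X) \lesssim |X+X|^{2}$, applied to both $X = A$ and $X = A+1$ (noting that $(A+1)+(A+1)$ is a translate of $A+A$), yields
\[
M \gtrsim |A|^{4}/|A+A|^{2}.
\]
For the second bound, the Pl\"unnecke--Ruzsa inequality applied to $B$ and $C$ gives $|AA| = |B+B| \lesssim M^{2}/|A|$, and combining with Solymosi's sum-product bound $|A+A|^{2}|AA| \gtrsim |A|^{4}$ we obtain
\[
M \gtrsim |A|^{5/2}/|A+A|.
\]

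The third and crucial bound comes from a Szemer\'edi--Trotter incidence argument. A natural configuration is the pencil of lines $\ell_{\lambda}: y = \lambda(x+1)$ for $\lambda \in A/A$, together with a point set built from $A$ and the auxiliary quotient $A(A+1)/A$; for each $(\lambda, b) \in (A/A) \times A$ the point $(b, \lambda(b+1))$ lies on $\ell_{\lambda}$, producing $|A/A|\,|A|$ incidences. Szemer\'edi--Trotter then forces $|A(A+1)/A|$ to be large, and combining with the Pl\"unnecke bound $|A(A+1)/A| \lesssim M^{2}/|A|$ together with Solymosi's lower bound $|A/A| \gtrsim |A|^{4}/|A+A|^{2}$ (which follows from $E^{\times}(A) \lesssim |A+A|^{2}$) produces an inequality of the shape $M \gtrsim |A|^{\alpha}/|A+A|^{\beta}$. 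Writing $|A+A| = |A|^{1+\tau}$, the pointwise maximum of the three lower bounds on $M$ is then minimized over $\tau$, and the resulting value is designed to equal $|A|^{24/19}$.

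The hardest step, and the source of the improvement over Garaev--Shen's $|A|^{5/4}$ bound, is the third one: with only the first two inequalities the max-min is attained at $\tau = 1/2$ with value $1$, i.e.\ the trivial bound $M \gtrsim |A|$. One therefore needs an incidence-based inequality whose slope in the $(\tau, \log_{|A|} M)$-plane differs sufficiently from those of the first two to strictly exceed $1$ at the critical value $\tau = 1/2$. Locating the right point/line configuration (so that the naive version does not simply collapse onto bound $\mathrm{(II)}$) and carefully tracking the Pl\"unnecke translations and logarithmic losses through to exactly the exponent $24/19$ is the principal technical obstacle.
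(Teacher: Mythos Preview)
Your proposal has a genuine gap: everything rests on the unnamed ``bound (III)'', and you do not actually produce it. You yourself note that bounds (I) and (II) together yield only the trivial $M\gtrsim|A|$, so the entire content of the theorem lies in step three---yet you only say that the outcome ``is designed to equal $|A|^{24/19}$'' and that locating the right configuration ``is the principal technical obstacle''. That is not a proof but a hope. Worse, the specific incidence setup you sketch cannot work: the lines $\ell_\lambda:y=\lambda(x+1)$ all pass through $(-1,0)$, so they form a pencil, and every point other than $(-1,0)$ lies on at most one of them. Szemer\'edi--Trotter gives nothing for a pencil; the inequality you would extract is just $|A/A|\le |A(A+1)/A|$, which is essentially trivial and does not interact usefully with (I) and (II).

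The paper's route is entirely different and does not mention $|A+A|$ or Solymosi at all. It works purely with multiplicative energies of various orders. The key input is a Szemer\'edi--Trotter lemma bounding, for any $B$, the number of ``rich'' products:
\[
\bigl|\{s\in AB:\ |A\cap sB^{-1}|\ge t\}\bigr|\ \ll\ \frac{|A(A+1)|^{2}|B|^{2}}{|A|\,t^{3}},
\]
obtained from the line set $\{\,y=(\alpha x-1)b:\alpha\in A(A+1),\,b\in B\,\}$ (note these lines are \emph{not} concurrent). Dyadic summation of this multiplicity bound gives $E_3(A),E_3(A+1)\lesssim |A(A+1)|^{2}|A|$, $E_2(A,A(A+1))\ll |A(A+1)|^{5/2}$ (and similarly with $A+1$), and also an inequality of the form $E_2(A)\ll |A(A+1)|^{2/3}|A|^{1/3}E_{1.5}(A)^{2/3}$, whence $|A|^{11}/|A(A+1)|^{5}\ll E_{1.5}(A)E_{1.5}(A+1)$. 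These are then fed into Li's lemma
\[
E_{1.5}(A)^{2}|B|^{2}\ \le\ E_2(A,AB)\,E_3(A)^{2/3}E_3(B)^{1/3},
\]
applied once with $B=A+1$ and once with the roles of $A$ and $A+1$ swapped, to close the loop and obtain $|A|^{24}\lesssim |A(A+1)|^{19}$. The exponent $24/19$ is thus a direct algebraic consequence of combining these energy bounds, not the result of a max--min optimisation in a parameter $\tau$.
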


In what follows, Section \ref{section:finite} gives the proof of Theorem \ref{theorem:mainfinite} and Section \ref{section:real} gives the proof of Theorem \ref{theorem:mainreal}. 

\section{The finite field case}\label{section:finite}

The overall strategy for proving Theorem \ref{theorem:mainfinite} is similar to the finite-field case in \cite{GS}. First, we bound a partial sumset from above in terms of $A(A+1)$. Second, we use the sum-product-type fact that if the partial sumset is small then $A(A+1)$ must be large. In combination, these show that $A(A+1)$ must always be large.

Our new bound follows from innovations in both parts of the strategy. In the first part, we find a more efficient upper bound on partial sumsets, via the Ruzsa-type observation that if $ab=cd$ then $(a-ab)-(c-cd)=a-c$. For the second part, we incorporate partial sumsets more efficiently into the sum-product framework. 

It is worth remarking that deploying the most recent innovation in sum-product techniques from \cite{rudnev} should lead to a further slight improvement in the result - we estimate improving the exponent from $57/56$ to $52/51$. But we leave this for another time, since the purpose of this paper is to introduce some new technical ideas, and the 52/51 bound would introduce even more technicality and make the proof less accessible. 

Section \ref{section:partial} establishes a new upper bound on partial sumsets. Section \ref{section:lemmata} records some triangle and covering lemmata from sumset calculus that we will need. Section \ref{section:proof} then uses the results of the previous two sections to prove Theorem \ref{theorem:mainfinite}.
 
\subsection{Bounding partial sumsets}\label{section:partial}

We first recall the definition of a partial sumset.

\begin{definition}
Let $A,B$ be sets and $G \subseteq A \times B$. We define the \textbf{partial sumset} $A\overset{G}+B=\left\{a+b:(a,b)\in G\right\}$. 
\end{definition}

This section proves the following upper bound for certain partial sumsets.

\begin{proposition}\label{theorem:idea}
Let $A,B\subseteq \mathbb{F}_p$, and let $\epsilon>0$. There exists $G \subseteq A \times B$ with $|G|\geq (1-\epsilon)|A||B|$ such that 
$$\left|A\stackrel{G}{-} B\right|\ll_{\epsilon} \frac{|A(B+1)||B(A+1)||A/B|}{|A||B|}.$$
\end{proposition}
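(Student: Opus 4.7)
My plan would be to combine a popular-ratios pigeonhole with the Ruzsa-type identity hinted at in the introduction. The key observation is that if $a,d\in A$ and $b,c\in B$ satisfy $ab=cd$, then
\[a(b+1)-c(d+1)=(ab+a)-(cd+c)=a-c,\]
so every multiplicative coincidence furnishes a representation of the difference $a-c\in A-B$ as an element of $A(B+1)-B(A+1)$. A ratio $r\in A/B$ with many representations will therefore supply many representations of each difference it controls, and a double-count over $A(B+1)\times B(A+1)$ will then bound the partial difference set from above.

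To make this precise I would set $m_r=|\{(a,c)\in A\times B:a/c=r\}|$ for $r\in A/B$, so $\sum_r m_r=|A||B|$. With threshold $\tau=\epsilon|A||B|/|A/B|$, let $G\subseteq A\times B$ consist of those pairs $(a,c)$ whose ratio $r=a/c$ is \emph{popular} in the sense that $m_r\geq\tau$. Pairs with unpopular ratios contribute at most $|A/B|\cdot\tau=\epsilon|A||B|$, so $|G|\geq(1-\epsilon)|A||B|$ as required.

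The core step is then the following. For each $(a,c)\in G$ with $r=a/c$, there are at least $\tau$ pairs $(d,b)\in A\times B$ with $d/b=r$; each satisfies $ab=cd$ and, by the identity above, produces a representation $a-c=a(b+1)-c(d+1)$ with $a(b+1)\in A(B+1)$ and $c(d+1)\in B(A+1)$. Since the map $b\mapsto a(b+1)$ is injective whenever $a\neq 0$, these produce $\tau$ distinct pairs in $A(B+1)\times B(A+1)$ whose difference is $a-c$. Therefore every $d\in A\stackrel{G}{-}B$ has at least $\tau$ preimages under subtraction from $A(B+1)\times B(A+1)$, and summing yields $|A\stackrel{G}{-}B|\cdot\tau\leq|A(B+1)||B(A+1)|$. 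Rearranging and absorbing the factor of $1/\epsilon$ into $\ll_\epsilon$ gives the proposition's bound. The only mild technical nuisance is the edge case $a=0$ (and, symmetrically, $c=0$), where distinctness of the $a(b+1)$ fails; this contributes at most $|A|+|B|$ pairs and can be absorbed by restricting to $A\setminus\{0\}$ and $B\setminus\{0\}$ at the outset, so I do not foresee any genuine conceptual obstacle.
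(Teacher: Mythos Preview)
Your proposal is correct and follows essentially the same route as the paper: a popular-ratios pigeonhole to define $G$, followed by the identity $a(b+1)-c(d+1)=a-c$ whenever $ab=cd$, together with a double count against $A(B+1)\times B(A+1)$. The only cosmetic difference is that the paper packages the final step as an explicit injection from a set $S$ of pairs $(\xi,(c,d))$ into $A(B+1)\times B(A+1)$, whereas you phrase it as ``each element of $A\stackrel{G}{-}B$ has at least $\tau$ representations''; these are the same argument, and your handling of the $0\in A\cup B$ edge case matches the paper's ``without loss of generality $0\notin A,B$''.
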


\begin{proof}
Without loss of generality assume that $0 \notin A,B$. Note that $\sum_{x \in A/B}\left|A \cap xB\right|=|A||B|.$ Let $X$ be the set of $x \in A/B$ for which $|A \cap xB|\geq \frac{\epsilon|A||B|}{|A/B|}$. Then

\begin{align*}
|A||B| &= \sum_{x \in X}|A \cap xB|+\sum_{x \notin X}|A \cap xB|\\
& \leq \sum_{x \in X}|A \cap xB| + \epsilon|A||B|
\end{align*}

and so $\sum_{x \in X}|A \cap xB| \geq (1-\epsilon)|A||B|$. Let $G=\left\{(a,b) \in A \times B:\frac{a}{b} \in X\right\}$ so that 

$$|G|=\sum_{x \in X}|A\cap xB|\geq (1-\epsilon) |A||B|.$$

For each $\xi \in A \stackrel{G}{-}B$ pick $a(\xi) \in A ,b(\xi)\in B$ such that $a(\xi)-b(\xi)=\xi$. Let

$$S=\left\{\left(\xi,(c,d)\right):\xi \in A\stackrel{G}{-}B,(c,d)\in A \times B, \frac{c}{d}=\frac{a(\xi)}{b(\xi)}\right\}$$

and note that

$$|S|\gg_{\epsilon} \frac{|A||B|}{|A/B|}\left|A\stackrel{G}{-}B\right|$$

since every element of $A\stackrel{G}{-}B$ occurs once as an abscissa of the set $S$, and each such abscissa has $\Omega\left(\frac{|A||B|}{|A/B|}\right)$ associated ordinates. We now show that $|S| \leq \left|A(B+1)\right|\left|B(A+1)\right|$. This will follow once we have shown that the map 

$$f:S \to A(B+1) \times B(A+1)$$ 
$$f(\xi,(c,d))= \left(a(\xi)+a(\xi)d,b(\xi)+b(\xi)c\right)$$

is an injection. Indeed for given $(t_1,t_2)$ in $f(S)$ we know 

$$\xi = a(\xi)-b(\xi)=\left(a(\xi)+a(\xi)d\right)-\left(b(\xi)+b(\xi)c\right)=t_1-t_2 $$

so we know $\xi$ and therefore $a(\xi)$ and $b(\xi)$. We therefore also know $(c,d)$ since $t_1=a(\xi)+a(\xi)d$ and $t_2=b(\xi)+b(\xi)c$. There is therefore only one choice of $(\xi,(c,d))\in S$ that can map to $(t_1,t_2)$.

Combining all of the above we see that 

$$\frac{|A||B|}{|A/B|}\left|A\stackrel{G}{-}B\right| \ll_{\epsilon} \left|A(B+1)\right|\left|B(A+1)\right|$$

and so $\left|A\stackrel{G}{-}B\right|\ll_{\epsilon} \frac{|A(B+1)||B(A+1)||A/B|}{|A||B|}$ as required.
\end{proof}

\subsection{Triangle and covering results}\label{section:lemmata}

We collect here a covering result and some variants of the Pl\"unnecke-Ruzsa triangle inequalities from sumset calculus. Note that whilst these are recorded in their additive form, they can also be applied without further remark to product sets.  We begin with the following lemma, which is useful when analysing particularly dense partial sumsets. Note that by the $G$-degree of $a \in A$ we mean the number of $b \in B$ such that $(a,b)\in G$. 

\begin{lemma}\label{theorem:denseG}Let $G \subseteq A \times B$ with $|G|\geq (1-\epsilon)|A||B|$. There exists $A' \subseteq A$ with $|A'|\geq (1-\sqrt{\epsilon})|A|$ such that the $G$-degree of every $a \in A'$ is at least $(1-\sqrt{\epsilon})|B|$. 
\end{lemma}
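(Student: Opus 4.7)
The plan is to use a straightforward averaging (Markov-type) argument on the $G$-degrees. First I would reinterpret the hypothesis: since $|G| \geq (1-\epsilon)|A||B|$, the total deficit $\sum_{a \in A} \bigl(|B| - d_G(a)\bigr)$, where $d_G(a) = |\{b \in B : (a,b) \in G\}|$, is at most $\epsilon|A||B|$.

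Next I would isolate the set of ``bad'' elements $A'' = \{a \in A : d_G(a) < (1-\sqrt{\epsilon})|B|\}$. Each $a \in A''$ contributes more than $\sqrt{\epsilon}|B|$ to the deficit, so
$$\sqrt{\epsilon}|B| \cdot |A''| \leq \sum_{a \in A''}\bigl(|B| - d_G(a)\bigr) \leq \epsilon|A||B|,$$
which immediately yields $|A''| \leq \sqrt{\epsilon}|A|$. Taking $A' = A \setminus A''$ then gives $|A'| \geq (1-\sqrt{\epsilon})|A|$, and by construction every $a \in A'$ has $G$-degree at least $(1-\sqrt{\epsilon})|B|$, which is exactly what is claimed.

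There is no real obstacle here: the result is just Markov's inequality applied to the deficit function $a \mapsto |B| - d_G(a)$, with threshold $\sqrt{\epsilon}|B|$, and the square-root savings come out automatically because $\epsilon/\sqrt{\epsilon} = \sqrt{\epsilon}$. So the proof should be a short paragraph rather than a technical argument.
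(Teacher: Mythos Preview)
Your proof is correct and is essentially the same averaging argument as in the paper: the paper defines $A'$ directly and bounds $|G|$ from above by splitting the sum $\sum_a |B_a|$ over $A'$ and $A\setminus A'$, whereas you work with the complement $A''=A\setminus A'$ and the deficit $|B|-d_G(a)$, but the inequalities unwind to the same computation. No changes are needed.
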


\begin{proof}
Let $A'\subseteq A$ be the set of $a \in A$ with $G$-degree at least $(1-\sqrt{\epsilon})|B|$. We will show that $|A'|\geq (1-\sqrt{\epsilon})|A|$. For each $a \in A$ let $B_a$ be the set of $b \in B$ such that $(a,b)\in G$. We have

\begin{align*}
(1-\epsilon)|A||B|& \leq |G|\\
&= \sum_{a \in A}|B_a|\\
&\leq \sum_{a \in A'}|B| + \sum_{a \in A \setminus A'}\left(1-\sqrt{\epsilon}\right)|B|\\
&\leq |A'||B| + (|A|-|A'|)\left(1-\sqrt{\epsilon}\right)|B|.
\end{align*}

We therefore obtain

\begin{align*}
(1-\epsilon)|A| &\leq |A'|+(|A|-|A'|)\left(1-\sqrt{\epsilon}\right)\\
&= (1-\sqrt{\epsilon})|A|+ \sqrt{\epsilon}|A'|
\end{align*}

and so $|A'|\geq (1-\sqrt{\epsilon})|A|$ as required. 
\end{proof}

The following covering result restates and slightly generalises one due to Shen \cite{shen}, showing that if $A \overset{G}-B$ is small for a large but not necessarily complete $G$, then much of $A$ can be covered by few translates of $B$.

\begin{lemma}\label{theorem:cover}
Let $G \subseteq A \times B$ and $0<\epsilon<1/4$. If $|G|\geq (1-\epsilon)|A||B|$ then there exists $A' \subseteq A$ with $|A'|\geq (1-2\sqrt{\epsilon}) |A|$ such that $A'$ is contained in the union of $O_{\epsilon}\left(\frac{|A\overset{G}-B|}{|B|}\right)$ translates of $B$. Similarly, we can find a subset $A'' \subseteq A$ with $|A''|\geq (1-2\sqrt{\epsilon}) |A|$  such that $A''$ in contained in the union of $O_{\epsilon}\left(\frac{|A\overset{G}-B|}{|B|}\right)$ translates of $-B$.
\end{lemma}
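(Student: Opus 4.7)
The approach is a greedy set-cover argument built on top of the previous lemma. First I would use Lemma \ref{theorem:denseG} to pass to a subset $A_0 \subseteq A$ of size at least $(1-\sqrt{\epsilon})|A|$ on which every $a$ has $G$-degree $|B_a| \geq (1-\sqrt{\epsilon})|B|$. This loses only $\sqrt{\epsilon}|A|$ elements, well within our budget of $2\sqrt{\epsilon}|A|$, and lets us ignore the low-degree vertices entirely in what follows.

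For each $\xi \in A \overset{G}{-} B$ define $E_\xi := A_0 \cap (\xi + B)$. The key incidence count is that every $a \in A_0$ lies in $E_\xi$ for every $\xi$ of the form $a-b$ with $b \in B_a$, and there are at least $(1-\sqrt{\epsilon})|B|$ such $\xi$; hence
$$\sum_{\xi \in A \overset{G}{-} B} |E_\xi| \;\geq\; (1-\sqrt{\epsilon})\,|A_0|\,|B|.$$
I would then run the standard greedy iteration: at each step pick the $\xi$ maximising $|E_\xi \cap R|$, where $R \subseteq A_0$ is the set not yet covered by the chosen translates. Averaging (applied with $A_0$ replaced by $R$) gives a $\xi$ with $|E_\xi \cap R| \geq (1-\sqrt{\epsilon})|R||B|/|A \overset{G}{-} B|$, so each step shrinks $|R|$ by at least the factor $1-(1-\sqrt{\epsilon})|B|/|A \overset{G}{-} B|$. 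After $O_\epsilon\!\bigl(|A \overset{G}{-} B|/|B|\bigr)$ iterations the residual $|R|$ drops below $\sqrt{\epsilon}|A|$, and taking $A'$ to be the covered set yields $|A'| \geq |A_0| - \sqrt{\epsilon}|A| \geq (1-2\sqrt{\epsilon})|A|$ inside the union of that many translates of $B$.

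For the second statement the mirror-image sets $E'_\xi := A_0 \cap (\xi - B)$ replace $E_\xi$ and the same greedy argument applies, now covering $A''$ by translates of $-B$. There is no serious obstacle here: once the greedy step is in hand, the rest is routine. The only point that needs genuine care is the $\epsilon$-bookkeeping, ensuring the $\sqrt{\epsilon}$ loss from Lemma \ref{theorem:denseG} combines cleanly with the $\sqrt{\epsilon}$ residual from the greedy phase to give the stated density $(1-2\sqrt{\epsilon})|A|$, and that the hypothesis $\epsilon < 1/4$ keeps both error terms strictly less than $1$.
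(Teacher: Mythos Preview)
Your proposal is correct and follows essentially the same greedy covering strategy as the paper: pass to a high-degree subset via Lemma~\ref{theorem:denseG}, then iteratively pick translates covering a constant fraction of what remains until the residual drops below $\sqrt{\epsilon}|A|$. The only cosmetic difference is that the paper routes the averaging step through the additive energy $E_+(A_*,B)$ and Cauchy--Schwarz, whereas you count incidences $\sum_\xi |E_\xi\cap R|$ directly; your version is in fact slightly cleaner and yields a marginally better constant $(1-\sqrt{\epsilon})$ in place of the paper's $(1-\sqrt{\epsilon})^2$, though this is irrelevant for the $O_\epsilon$ conclusion.
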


\begin{proof}
We shall prove the case for covering with translates of $B$, and remark on the slight alteration needed to cover with translates of $-B$. Since $|G|\geq (1-\epsilon)|A||B|$ there is by Lemma \ref{theorem:denseG} a subset $A_1 \subseteq A$ with $|A_1|\geq (1-\sqrt{\epsilon}) |A|$ such that every element of $A_1$ has $G$-degree at least $(1-\sqrt{\epsilon}) |B|$. 

Now for any subset $A_* \subseteq A_1$ let $G_*=G \cap (A_* \times B)$ so that $|G_*| \geq (1-\sqrt{\epsilon})|A_*||B|$. Write $E_+(A_*,B)$ for the additive energy of $A_*$ and $B$, i.e. the number of solutions to $a+b=a'+b'$ with $a,a' \in A_*$ and $b,b' \in B$. This is the same as the number of solutions to $a-b=a'-b'$, which is in turn at least the number of solutions for which $(a,b)$ and $(a',b')$ are both in $G^*$.

By Cauchy-Schwarz and the lower bound on $|G_*|$ we therefore have

$$E_+(A_*,B) \geq \frac{|G_*|^2}{|A\overset{G}-B|}\geq \frac{|A_*|^2(1-\sqrt{\epsilon})^2|B|^2}{|A\overset{G}-B|}.$$

Since 

\begin{equation}\label{eq:energydef}
E_+(A^*,B)=\sum_{a \in A^*, b \in B}|A_* \cap (a-b)+B |
\end{equation}

this means that for any $A_* \subseteq A_1$ there exists $a \in A_*$, $b \in B$ such that 

$$|A_* \cap (a-b)+B |\geq \frac{|A_*| (1-\sqrt{\epsilon})^2 |B|}{|A\overset{G}-B|}.$$

We apply the above discussion to a sequence of subsets of $A$. We begin by taking $A_*=A_1$ to find $a_1 \in A_1$, $b_1 \in B$ such that

$$|A_1 \cap (a_1-b_1)+B |\geq \frac{|A_1| (1-\sqrt{\epsilon})^2 |B|}{|A\overset{G}-B|}.$$

So the translate $(a_1-b_1)+B$ covers $\frac{|A_1| (1-\sqrt{\epsilon})^2 |B|}{|A\overset{G}-B|}$ elements of $A_1$. We  discard $A_1 \cap (a_1-b_1)+B$ from $A_1$ and let $A_2$ be the set of elements remaining, now taking $A_*=A_2$ and repeating the process. We iterate $O_{\epsilon}\left(\frac{|A\overset{G}-B|}{|B|}\right)$ times until the set remaining is of cardinality no more than $\sqrt{\epsilon}|A_1|$. We then take $A'$ to be the set of elements discarded across all iterations, so that $|A'|\geq (1-\sqrt{\epsilon})|A_1|$. Since $|A_1|\geq (1-\sqrt{\epsilon})|A|$ we get $|A'|\geq (1-2\sqrt{\epsilon})|A|$ as required, which completes the proof for covering with translates of $B$.

The proof for covering with translates of $-B$ is identical, except that in place of (\ref{eq:energydef}) we use $E_+(A^*,B)=\sum_{a \in A^*, b \in B}|A_* \cap (a+b)-B |$.
\end{proof}

Our next result is the well-known Ruzsa triangle inequality, see e.g. \cite{TV} for a proof.

\begin{lemma}\label{theorem:ruzsa}
For sets $A,B,C$ we have
$$|A-B|\leq \frac{|A-C||B-C|}{|C|}.$$
\end{lemma}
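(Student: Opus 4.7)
The plan is to prove the inequality by constructing an injection from $(A-B) \times C$ into $(A-C) \times (B-C)$; comparing cardinalities then gives $|A-B||C| \leq |A-C||B-C|$, which on dividing by $|C|$ is the desired bound. This is in the same spirit as the injection argument used in the proof of Proposition \ref{theorem:idea}, where an element is reconstructed by combining its difference with the values of auxiliary coordinates.

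Concretely, I would first fix, for every $x \in A-B$, a choice of representatives $a(x) \in A$ and $b(x) \in B$ with $a(x) - b(x) = x$. (Any such choice works; this is just the axiom of choice applied to a finite family of nonempty fibers.) Then define
$$\phi : (A-B) \times C \longrightarrow (A-C) \times (B-C), \qquad \phi(x,c) = \bigl(a(x) - c,\; b(x) - c\bigr).$$
The map is well-defined since $a(x) - c \in A-C$ and $b(x) - c \in B-C$.

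The key step is verifying injectivity. Given the image $(u,v) = (a(x)-c, b(x)-c)$, note that $u - v = a(x) - b(x) = x$, so $x$ is determined. Once $x$ is known, the representatives $a(x)$ and $b(x)$ are determined by our fixed choice, and hence $c = a(x) - u$ is determined as well. So $\phi$ is injective, and therefore
$$|A-B|\,|C| = |(A-B)\times C| \leq |\phi((A-B)\times C)| \cdot \text{(trivially)} \leq |A-C|\,|B-C|,$$
which rearranges to the statement of the lemma.

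There is essentially no obstacle here — the proof is a short pigeonhole/injection argument. The only small subtlety is the choice of representatives $a(x), b(x)$, which is needed to make the inverse of $\phi$ well-defined on the image; this is the same mechanism that drives the injection in the proof of Proposition \ref{theorem:idea}, so it fits naturally into the narrative of the paper.
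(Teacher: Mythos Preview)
Your argument is correct and is exactly the standard Ruzsa injection proof. The paper does not supply its own proof of this lemma at all --- it simply writes ``see e.g.\ \cite{TV} for a proof'' --- and the proof given there is precisely the injection $(x,c)\mapsto(a(x)-c,\,b(x)-c)$ that you wrote down, so there is nothing to compare.
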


We now prove an extension to Lemma \ref{theorem:ruzsa} to high-density partial sumsets. This is set as Exercise 2.5.4 in the book of Tao and Vu \cite{TV}. In the case $\epsilon=0$ it collapses to the statement of the Lemma \ref{theorem:ruzsa}.

\begin{lemma}\label{theorem:semibsg}
Let $0<\epsilon<1/4$ and let $G \subseteq A \times B$ and $H \subseteq B \times C$, such that $|G|\geq (1-\epsilon)|A||B|$ and $|H|\geq (1-\epsilon)|B||C|$. Then there exist $A' \subseteq A$ and $C' \subseteq C$ with $|A'|\geq (1-\sqrt{\epsilon})|A|$ and $|C'|\geq (1-\sqrt{\epsilon})|C|$ such that 

$$|A'-C'|\ll_{\epsilon}\frac{|A\overset{G}-B||B\overset{H}-C|}{|B|}.$$
\end{lemma}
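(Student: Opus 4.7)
The plan is to mimic the classical proof of the Ruzsa triangle inequality (Lemma \ref{theorem:ruzsa}), but replacing the use of an arbitrary $b \in B$ in the injection with a $b$ that is simultaneously ``good'' for both partial sumsets. The density hypotheses on $G$ and $H$, combined with Lemma \ref{theorem:denseG}, will guarantee that almost every $a \in A$ has many admissible partners and almost every $c \in C$ does too, which is exactly what is needed to make the argument go through for most of $A \times C$.

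More concretely, I would start by applying Lemma \ref{theorem:denseG} directly to $G$ to produce a subset $A' \subseteq A$ with $|A'| \geq (1-\sqrt{\epsilon})|A|$ such that for every $a \in A'$, the set $B_a := \{b \in B : (a,b)\in G\}$ satisfies $|B_a| \geq (1-\sqrt{\epsilon})|B|$. Then I would apply Lemma \ref{theorem:denseG} to $H$ with the roles of its two coordinates reversed (i.e.\ view $H$ as a dense subset of $C \times B$) to produce a subset $C' \subseteq C$ with $|C'| \geq (1-\sqrt{\epsilon})|C|$ such that for every $c \in C'$, the set $B^c := \{b \in B : (b,c) \in H\}$ satisfies $|B^c| \geq (1-\sqrt{\epsilon})|B|$. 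By inclusion–exclusion, for any $a \in A'$ and $c \in C'$ we then have $|B_a \cap B^c| \geq (1 - 2\sqrt{\epsilon})|B|$, which is at least $\tfrac{1}{2}|B|$ for $\epsilon < 1/16$ (the restriction $\epsilon < 1/4$ in the statement will do after we absorb constants into the $\ll_\epsilon$).

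Next I would run the Ruzsa-style injection. For each $d \in A' - C'$, fix a representation $d = a(d) - c(d)$ with $a(d) \in A'$, $c(d) \in C'$, and consider the map
$$\Phi : \bigl\{(d,b) : d \in A'-C',\ b \in B_{a(d)} \cap B^{c(d)}\bigr\} \longrightarrow (A \stackrel{G}{-} B) \times (B \stackrel{H}{-} C),\qquad \Phi(d,b) = (a(d) - b,\ b - c(d)).$$
The image does lie in $(A \stackrel{G}{-} B) \times (B \stackrel{H}{-} C)$ by the choice of $b$, and $\Phi$ is injective by the standard Ruzsa trick: if $\Phi(d,b) = (x,y)$ then $x + y = a(d) - c(d) = d$ recovers $d$, hence $a(d)$ and $c(d)$, and then $b = a(d) - x$. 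The domain has cardinality at least $(1 - 2\sqrt{\epsilon})|B| \cdot |A' - C'|$, so
$$(1-2\sqrt{\epsilon})|B|\,|A'-C'| \leq \bigl|A \stackrel{G}{-} B\bigr| \bigl|B \stackrel{H}{-} C\bigr|,$$
and rearranging yields the claimed bound with the implied constant $(1-2\sqrt{\epsilon})^{-1}$, which is $O_\epsilon(1)$ in the range $\epsilon < 1/4$.

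There is really no ``hard part'' here beyond choosing the correct way to apply Lemma \ref{theorem:denseG}: the substantive content is packaged in that lemma, and the rest is a routine adaptation of the Ruzsa injection. The only subtlety to be careful about is orienting $H \subseteq B \times C$ the right way when invoking Lemma \ref{theorem:denseG}, since we need density on the $C$-side rather than the $B$-side; but as noted, this is handled by transposing coordinates before applying the lemma.
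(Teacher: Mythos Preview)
Your proposal is correct and matches the paper's proof essentially line for line: the paper applies Lemma \ref{theorem:denseG} to $G$ and $H$ to obtain $A'$ and $C'$ with the same degree bounds, observes $|B_a \cap B_c| \geq (1-2\sqrt{\epsilon})|B|$, and then runs the identical Ruzsa-style injection $(x,b) \mapsto (a(x)-b,\, b-c(x))$. Your remark about transposing coordinates of $H$ before invoking Lemma \ref{theorem:denseG} is the one detail the paper leaves implicit in its ``Similarly, there exists $C'$\ldots''.
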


\begin{proof}
Since $|G|\geq(1-\epsilon)|A||B|$ there exists by Lemma \ref{theorem:denseG} a set $A' \subseteq A$ with $|A'|\geq (1-\sqrt{\epsilon})|A|$ such that every $a \in A'$ has $G$-degree at least $(1-\sqrt{\epsilon})|B|$. Similarly, there exists $C' \subseteq C$ with $|C'|\geq (1-\sqrt{\epsilon})|C|$ such that every $c \in C'$ has $H$-degree at least $(1-\sqrt{\epsilon})|B|$. We shall show that $|A'-C'|$ satisfies the required upper bound.

For each $a \in A'$ let $B_a$ be the set of $b \in B$ such that $(a,b)\in G$. And for each $c \in C'$ let $B_c$ be the set of $b \in B$ such that $(b,c)\in H$. For each $a \in A', c \in C'$ we know that $|B_a|,|B_c|\geq (1-\sqrt{\epsilon})|B|$ and so $|B_a \cap B_c|\geq (1-2\sqrt{\epsilon})|B|$.

For each $x \in A'-C'$, fix $a(x) \in A', c(x) \in C'$ such that $a(x)-c(x)=x$. Let 

$$Y=\left\{(x,b) : x\in A'-C', b \in B_{a(x)}\cap B_{c(x)}\right\}.$$

We know that $|Y| \geq (1-2\sqrt{\epsilon})|B||A'-C'|$. And the injection 

$$(x,b)\mapsto (a(x)-b,b-c(x))$$ 

from $Y$ to $A\overset{G}-B \times B \overset{H}-C$ shows that $|Y|\leq |A\overset{G}-B||B\overset{H}-C|$. Comparing the upper and lower bounds on $Y$, we are done. 

\end{proof}

We record a consequence that will be of use to us.

\begin{corollary}\label{theorem:semibsgapp}
Let $0<\epsilon<1/16$ and let $G \subseteq A \times A$ with $|G| \geq (1-\epsilon)|A|^2$. Then there exists $A' \subseteq A$ with $|A'|\gg_{\epsilon}|A|$ such that 

$$|A'-A'|\ll_{\epsilon}\frac{|A \overset{G}-A|^2}{|A|}.$$
\end{corollary}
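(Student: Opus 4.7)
The plan is to apply Lemma \ref{theorem:semibsg} directly, specialising both $B$ and $C$ to $A$ and taking $H=G$. Since $G\subseteq A\times A$ satisfies $|G|\geq(1-\epsilon)|A|^{2}$, the hypotheses of Lemma \ref{theorem:semibsg} are met with the same parameter $\epsilon$. The lemma then produces subsets $A_{1},A_{2}\subseteq A$, each of cardinality at least $(1-\sqrt{\epsilon})|A|$, such that
$$|A_{1}-A_{2}|\ll_{\epsilon}\frac{|A\stackrel{G}{-}A||A\stackrel{G}{-}A|}{|A|}=\frac{|A\stackrel{G}{-}A|^{2}}{|A|}.$$

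To turn this two-sided statement into a statement about $A'-A'$ for a single set $A'$, I would set $A'=A_{1}\cap A_{2}$. Then $|A'|\geq|A|-(|A|-|A_{1}|)-(|A|-|A_{2}|)\geq(1-2\sqrt{\epsilon})|A|$, and the assumption $\epsilon<1/16$ gives $\sqrt{\epsilon}<1/4$, so $|A'|\geq|A|/2\gg_{\epsilon}|A|$. Since $A'\subseteq A_{1}$ and $A'\subseteq A_{2}$, we have $A'-A'\subseteq A_{1}-A_{2}$, and the desired bound
$$|A'-A'|\leq|A_{1}-A_{2}|\ll_{\epsilon}\frac{|A\stackrel{G}{-}A|^{2}}{|A|}$$
follows immediately.

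There is no real obstacle here: the work has been done in Lemma \ref{theorem:semibsg}, and the corollary is obtained simply by identifying $B=C=A$, identifying $H$ with $G$, and then intersecting the two large subsets returned by the lemma so as to obtain a single large subset of $A$ whose difference set obeys the desired estimate. The constraint $\epsilon<1/16$ is just what is needed to ensure that this intersection remains a positive proportion of $A$.
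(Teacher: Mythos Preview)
Your proposal is correct and follows essentially the same approach as the paper: apply Lemma~\ref{theorem:semibsg} with $B=C=A$ and $H=G$, intersect the two resulting large subsets to get a single $A'$, and use $\epsilon<1/16$ to guarantee $|A'|\gg_\epsilon|A|$. The paper's proof is virtually identical (it records the slightly weaker bound $|A_1|,|A_2|\geq(1-2\sqrt{\epsilon})|A|$, but the argument is the same).
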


\begin{proof}
Apply Lemma \ref{theorem:semibsg} with $A=B=C$ and $H=G$. There exist $A_1,A_2 \subseteq A$ with $|A_1|,|A_2|\geq (1-2\sqrt{\epsilon})|A|$ such that $|A_1-A_2|\ll \frac{|A\overset{G}-A|^2}{|A|}$. We then take $A'=A_1 \cap A_2$. Since $\epsilon<1/16$ we know that $1-2\sqrt{\epsilon}>1/2$ and so $|A'|\gg_{\epsilon}|A|$. Moreover, $|A'-A'|\leq |A_1-A_2| \leq \frac{|A\overset{G}-A|^2}{|A|}$.
\end{proof}

Our final triangle result is the Pl\"unnecke-Ruzsa inequality. We use a variant due to Katz and Shen \cite{ks}.

\begin{lemma}\label{theorem:pl}
For sets $A,X_1,\ldots,X_k$, there exists $A' \subseteq A$ with $|A'| \approx |A|$ such that 
$$\left|A'+X_1+\ldots X_k\right|\ll \frac{\prod\limits_{j=1}^k \left|A+X_j\right|}{|A|^{k-1}}.$$
\end{lemma}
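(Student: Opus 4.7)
The plan is to prove this by induction on $k$, using the classical Plünnecke-Ruzsa machinery to pass from $k-1$ to $k$ summands and absorbing losses from subset extraction into the polylogarithmic slack of the $\lesssim$ and $\approx$ notation.

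The base case $k=1$ is trivial, taking $A' = A$. For the inductive step, write $Y = X_1 + \cdots + X_{k-1}$ and apply the induction hypothesis to $X_1, \ldots, X_{k-1}$ to obtain $A_0 \subseteq A$ with $|A_0| \approx |A|$ and
$$|A_0 + Y| \ll \frac{\prod_{j=1}^{k-1}|A+X_j|}{|A|^{k-2}}.$$
Next I would apply the Plünnecke-Ruzsa inequality in Petridis form to the pair $(A_0, X_k)$, which yields a nonempty $A' \subseteq A_0$ such that $|A' + X_k + Z| \leq (|A_0 + X_k|/|A_0|) \cdot |A' + Z|$ for every finite set $Z$. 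Specialising $Z = Y$, combining with the inductive bound, and using $|A_0 + X_k| \leq |A+X_k|$ together with $|A_0| \gtrsim |A|$ gives
$$|A' + X_1 + \cdots + X_k| \lesssim \frac{|A+X_k|}{|A|} \cdot |A_0 + Y| \ll \frac{\prod_{j=1}^{k}|A+X_j|}{|A|^{k-1}},$$
which is the desired estimate.

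The main obstacle is ensuring $|A'| \approx |A|$ throughout the $k$ iterations, since Petridis's form only guarantees nonempty $A'$. To enforce this, I would dyadically pigeonhole the elements of $A_0$ into level sets according to their addition-by-$X_k$ fibre sizes, pass to the largest such class (losing at most a logarithmic factor in $|A_0|$), and run the Plünnecke extraction inside it. The total loss across all $k$ iterations remains polylogarithmic in $|A|$, which is absorbed into $\lesssim$. This refinement is precisely what distinguishes the Katz-Shen variant in \cite{ks} from the standard Plünnecke-Ruzsa inequality, and it is crucial because the subsequent applications in Section~\ref{section:proof} cannot afford to discard most of $A$ at this step.
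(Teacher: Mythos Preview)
The paper does not actually prove this lemma; it merely attributes it to Katz and Shen \cite{ks} and states it. Your inductive framework with Petridis's inequality is a valid route to the upper bound: once you have $A_0\subseteq A$ from the inductive hypothesis and then $A'\subseteq A_0$ from Petridis, the chain
\[
|A'+X_k+Y|\le\frac{|A_0+X_k|}{|A_0|}\,|A'+Y|\le\frac{|A+X_k|}{|A_0|}\,|A_0+Y|
\]
does the job.

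The gap is in your size-preservation step. The phrase ``dyadically pigeonhole the elements of $A_0$ into level sets according to their addition-by-$X_k$ fibre sizes'' does not name a well-defined per-element quantity, and I do not see how restricting to any such dyadic class would force the Petridis-minimal subset inside it to be large. What Katz and Shen actually do is a \emph{removal} argument, not pigeonholing, and it is applied directly to the $k$-fold sum rather than inductively: apply the multi-summand Pl\"unnecke inequality to $A$ to obtain a nonempty $A_1'\subseteq A$ with $|A_1'+X_1+\cdots+X_k|\le(\prod_j K_j)|A_1'|$, where $K_j=|A+X_j|/|A|$; delete $A_1'$ and repeat on $A^{(1)}=A\setminus A_1'$. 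Since $|A^{(m)}+X_j|\le K_j|A|$ at every stage, the effective doubling constants are at most $K_j|A|/|A^{(m)}|$, so while $|A^{(m)}|\ge\epsilon|A|$ the extracted piece satisfies $|A_m'+\sum_jX_j|\le\epsilon^{-k}(\prod_jK_j)|A_m'|$. Stopping once the residue drops below $\epsilon|A|$ and taking $A'=\bigcup_m A_m'$ yields $|A'|\ge(1-\epsilon)|A|$ and $|A'+\sum_jX_j|\le\epsilon^{-k}(\prod_jK_j)|A|$, a constant-factor loss depending only on $\epsilon$ and $k$. Note finally that the lemma is stated with $\ll$ and $\approx$, not $\lesssim$: there is no polylogarithmic slack to absorb into, nor is any needed.
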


\subsection{Proving Theorem \ref{theorem:mainfinite}}\label{section:proof}

As a preliminary step we record two consequences of applying the results of Section \ref{section:lemmata} to the partial sumset bound from Proposition \ref{theorem:idea}. Our first consequence is the least-efficient, due to its use of Corollary \ref{theorem:semibsgapp}. Because of this we will employ it only when compelled.

\begin{corollary}\label{theorem:horrific}
For any set $A \subseteq \mathbb{F}_p$ there exists $A'\subseteq A$ with $|A'|\gg |A|$ such that $\left|A'-A'\right|\ll \frac{|A(A+1)|^8}{|A|^7}.$
\end{corollary}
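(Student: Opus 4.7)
The plan is to chain three results already proved. First, I would apply Proposition \ref{theorem:idea} with $B = A$ and a fixed small parameter, say $\epsilon = 1/32$ (chosen below $1/16$ so that Corollary \ref{theorem:semibsgapp} remains available later). This produces a graph $G \subseteq A \times A$ with $|G| \geq (1-\epsilon)|A|^2$ and
$$\left|A \overset{G}{-} A\right| \ll \frac{|A(A+1)|^2\,|A/A|}{|A|^2}.$$

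Second, I would feed this $G$ into Corollary \ref{theorem:semibsgapp}, which extracts a subset $A' \subseteq A$ with $|A'| \gg |A|$ such that $|A'-A'| \ll |A \overset{G}{-} A|^2/|A|$. Chaining these two bounds gives $|A'-A'| \ll |A(A+1)|^4\,|A/A|^2/|A|^5$. The final step is to absorb the factor $|A/A|$ into the right-hand side using Ruzsa's triangle inequality (Lemma \ref{theorem:ruzsa}), interpreted in the multiplicative setting: for any pivot $C$ disjoint from zero one has $|A/A| \leq |A/C|^2/|C|$. Choosing $C = (A+1)^{-1}$ makes $|A/C| = |A(A+1)|$ and $|C| = |A|$, hence $|A/A| \leq |A(A+1)|^2/|A|$. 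Substituting this in collapses the bound to $|A'-A'| \ll |A(A+1)|^8/|A|^7$, as required.

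The only non-mechanical ingredient is spotting the right pivot for Ruzsa's triangle inequality. Taking the obvious-looking pivots $C = A$ or $C = A+1$ would instead produce the set $A/(A+1)$, which we cannot directly control; inverting to $C = (A+1)^{-1}$ is what turns both factors on the right-hand side into the desired quantity $|A(A+1)|$. Everything else is a routine chaining of earlier inequalities.
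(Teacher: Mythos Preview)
Your proposal is correct and matches the paper's proof essentially step for step: apply Proposition~\ref{theorem:idea} with $B=A$, feed the resulting $G$ into Corollary~\ref{theorem:semibsgapp}, and finish by bounding $|A/A|\leq |A(A+1)|^2/|A|$ via the multiplicative Ruzsa triangle inequality. Your explicit identification of the pivot $C=(A+1)^{-1}$ is exactly what makes the last step work, and the paper uses the same bound (stated without naming the pivot).
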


\begin{proof}
Apply Proposition \ref{theorem:idea} with $A=B$ to find $G \subseteq A \times A$ with $|G|\approx |A|^2$ such that $|A \overset{G}-A|\ll \frac{|A(A+1)|^2|A/A|}{|A|^2}.$ By Corollary \ref{theorem:semibsgapp} there exists $A' \subseteq A$ with $|A'|\approx |A|$ such that $|A'-A'|\ll \frac{|A(A+1)|^4|A/A|^2}{|A|^5}.$ By Lemma \ref{theorem:ruzsa} applied multiplicatively we have $|A/A|\leq \frac{|A(A+1)|^2}{|A|}$ and so obtain the result.
\end{proof}

Our second consequence follows by applying Lemma \ref{theorem:cover} to Proposition \ref{theorem:idea}

\begin{corollary}\label{theorem:energy}
Let $A,B,C \subseteq \mathbb{F}_p$, and $A,B\subseteq xC+y$ for some $x \in \mathbb{F}_p^*, y \in \mathbb{F}_p$. Let $0<\epsilon<1/16$. Then $(1-\epsilon)|A|$ elements of $A$ can be covered by $$O_{\epsilon}\left( \frac{|C(C+1)|^2|C/C|}{|A||B|^2}\right)$$ translates of $B$. Similarly, $(1-\epsilon)|A|$ elements of $A$ can be covered by this many translates of $-B$. 
\end{corollary}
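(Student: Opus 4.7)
The plan is to transplant the problem from $A,B$ to subsets of $C$ via the affine map witnessing $A, B \subseteq xC+y$, apply Proposition \ref{theorem:idea} there, and convert the resulting partial-sumset bound into a covering via Lemma \ref{theorem:cover}.

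Concretely, set $A' := (A-y)/x$ and $B' := (B-y)/x$, both of which lie in $C$ by hypothesis, with $|A'|=|A|$ and $|B'|=|B|$. I will apply Proposition \ref{theorem:idea} to $A'$ and $B'$ with parameter $\epsilon' := \epsilon^2/4$; since $\epsilon<1/16$ this lies comfortably inside the admissible ranges for both Proposition \ref{theorem:idea} and Lemma \ref{theorem:cover}. This yields $G \subseteq A' \times B'$ with $|G|\geq(1-\epsilon')|A'||B'|$ satisfying
$$\left|A'\stackrel{G}{-}B'\right|\ll_{\epsilon}\frac{|A'(B'+1)||B'(A'+1)||A'/B'|}{|A'||B'|}.$$
The key observation is that $A', B' \subseteq C$ forces $A'(B'+1),\, B'(A'+1) \subseteq C(C+1)$ and $A'/B' \subseteq C/C$, so
$$\left|A'\stackrel{G}{-}B'\right|\ll_{\epsilon}\frac{|C(C+1)|^2|C/C|}{|A||B|}.$$

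Applying Lemma \ref{theorem:cover} with the same $\epsilon'$ produces a subset of $A'$ of size at least $(1-2\sqrt{\epsilon'})|A'| = (1-\epsilon)|A|$ contained in a union of $O_{\epsilon}\!\left(|C(C+1)|^2|C/C|/(|A||B|^2)\right)$ translates of $B'$. Finally, I pull back via $t\mapsto xt+y$: an inclusion $a' \in s + B'$ gives $xa'+y \in (xs) + (xB'+y) = xs + B$, so each translate of $B'$ covering part of $A'$ corresponds to a single translate of $B$ covering the image in $A = xA'+y$. The $-B$ variant follows identically by invoking the second conclusion of Lemma \ref{theorem:cover}. There is no real obstacle here; the one bookkeeping point is reconciling the $\epsilon$'s across Proposition \ref{theorem:idea} and Lemma \ref{theorem:cover}, which dictates the choice $\epsilon' = \epsilon^2/4$ and motivates the hypothesis $\epsilon<1/16$.
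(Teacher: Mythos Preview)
Your proof is correct and follows essentially the same approach as the paper. The only cosmetic difference is the order of operations: the paper first pulls back $G_{xy}$ to a graph $G\subseteq A\times B$ (using that $|A\stackrel{G}{-}B|=|A'\stackrel{G_{xy}}{-}B'|$) and then applies Lemma~\ref{theorem:cover}, whereas you apply Lemma~\ref{theorem:cover} at the level of $A',B'$ and then push forward the resulting covering via $t\mapsto xt+y$; these are manifestly equivalent.
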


\begin{proof}
Applying Proposition \ref{theorem:idea} to the sets $A_{xy}=\frac{A-y}{x} \subseteq C$ and $B_{xy}=\frac{B-y}{x} \subseteq C$ we find $G_{xy}\subseteq A_{xy} \times B_{xy}$ of cardinality at least $(1-\epsilon^2/4)|A||B|$ such that 

\begin{align*}|A_{xy}\overset{G_{xy}}-B_{xy}|&\ll \frac{|A_{xy}(B_{xy}+1)||B_{xy}(A_{xy}+1)||A_{xy}/B_{xy}|}{|A||B|}\\
&\leq \frac{|C(C+1)|^2|C/C|}{|A||B|}.
\end{align*}

We then let $G=\left\{(a,b):\left(\frac{a-y}{x},\frac{b-y}{x}\right)\in G_{xy}\right\}$ to obtain $|A\overset{G}-B|\ll \frac{|C(C+1)|^2|C/C|}{|A||B|}.$ By applying Lemma \ref{theorem:cover} to $G$, the result follows.
\end{proof}

We now begin the proof of Theorem \ref{theorem:mainfinite}. By Corollary \ref{theorem:horrific} and passing to a subset if necessary we may assume that

\begin{equation}\label{eq:finite1}
|A-A|\ll \frac{|A(A+1)|^8}{|A|^7}.
\end{equation}

By Lemma \ref{theorem:pl} we may assume that

\begin{equation}\label{eq:finite2}
|A-A-A-A|\ll \frac{|A-A|^3}{|A|^2}.
\end{equation}

Now, by Cauchy-Schwarz we know that 

$$\sum_{a,b \in A}\left|a(A+1) \cap b(A+1) \right|\geq \frac{|A|^4}{|A(A+1)|}.$$

So there exists $b_0 \in A$ such that

$$\sum_{a \in A}\left|a(A+1) \cap b_0 (A+1)\right|\geq \frac{|A|^3}{|A(A+1)|}.$$

By dyadic pigeonholing we can find $A_1 \subseteq A$ and $N \in \mathbb{N}$ such that

$$\left|a(A+1) \cap b_0 (A+1)\right|\approx N$$ 

for all $a \in A_1$ and 

\begin{equation}\label{eq:finite3}
N|A_1|\gtrsim \frac{|A|^3}{|A(A+1)|}.
\end{equation}

Since $|A_1|\leq |A|$ we also have 

\begin{equation}\label{eq:finite4}
N\gtrsim \frac{|A|^2}{|A(A+1)|}.
\end{equation}

We now consider the set $$R(A_1)=\left\{\frac{\alpha-\beta}{\gamma-\delta}:\alpha,\beta,\gamma, \delta \in A_1\right\}$$ and break into two cases, according to whether or not $R(A_1)=\mathbb{F}_p$.

\subsubsection{$R(A_1)\neq \mathbb{F}_p$}
Since $R(A_1)\neq \mathbb{F}_p$ we can find $\xi=\frac{\alpha-\beta}{\gamma-\delta} \in R(A_1)$ such that $\xi-1 \notin R(A_1)$. Since $\xi-1 \notin R(A_1)$, the set $A_1+(\xi-1) A_1$ has no repetitions and so for any subset $A^* \subseteq A_1$ we have

\begin{align}
|A^*|^2 &\approx \left|A^* + A^*(\xi-1)\right|\nonumber\\
&\approx \left|A^*+ \frac{\alpha-\beta-\gamma + \delta}{\gamma-\delta}A^*\right|\nonumber\\
&=  \left|(\gamma - \delta)A^*+ (\alpha-\beta-\gamma + \delta)A^*\right|\nonumber\\
&\leq \left|(\gamma - \delta)A^*+ (\alpha-\beta)A^*-(\gamma - \delta)A^*\right|\label{eq:finite7}.
\end{align}

We now proceed to fix a particular choice of $A^*$. Let $\epsilon>0$ be sufficiently small. Applying Corollary \ref{theorem:energy} to the sets $\alpha(A_1+1)$ and $b_0(A_1+1)\cap \alpha(A_1+1)$ we see that there is a set $A_{\alpha} \subseteq A_1$ with $\left|A_{\alpha}\right|\geq (1-\epsilon)|A_1|$ such that $\alpha A_{\alpha}$ is contained in the union of $O_{\epsilon}\left(\frac{|A(A+1)|^2|A/A|}{N^2|A_1|}\right)$ translates of $b_0 A$. Similarly, we can find sets $A_{\beta},A_{\gamma},A_{\delta} \subseteq A_1$ such that $\beta A_{\beta}$ and $\gamma A_{\gamma}$ are contained in the union of 
$O_{\epsilon}\left(\frac{|A(A+1)|^2|A/A|}{N^2|A_1|}\right)$ translates of $b_0 A$, and $\delta A_{\delta}$ is contained in the union of $O_{\epsilon}\left(\frac{|A(A+1)|^2|A/A|}{N^2|A_1|}\right)$ translates of $-b_0 A$. We let  

$$A_2 = A_{\alpha}\cap A_{\beta}\cap A_{\gamma} \cap A_{\delta}$$

so that $|A_2|\geq (1-4\epsilon)|A_1|$. By Lemma \ref{theorem:pl} there is a set $A_3 \subseteq A_2$ with $|A_3|\gg |A_2|\gg |A_1|$ and the property that

$$\left|(\gamma - \delta)A_3+ (\alpha-\beta)A_3-(\gamma - \delta)A_3\right|\ll \frac{|A_2-A_2||(\alpha-\beta)A_2-(\gamma-\delta)A_2|}{|A_2|}.$$

We fix $A^*=A_3$ and so obtain from (\ref{eq:finite7}):

\begin{align*}
|A_1|^2 &\ll \left|(\gamma - \delta)A_3+ (\alpha-\beta)A_3-(\gamma - \delta)A_3\right|\\
&\ll \frac{|A_2-A_2||(\alpha-\beta)A_2-(\gamma-\delta)A_2|}{|A_2|}\\
&\leq \frac{|A-A||\alpha A_2- \beta A_2 - \gamma A_2 + \delta A_2|}{|A_1|}
\end{align*}

Since $A_2 \subseteq A_{\alpha},A_{\beta},A_{\gamma},A_{\delta}$, and $\alpha A_{\alpha}, \beta A_{\beta}, \gamma A_{\gamma}$
are each contained in the union of $O\left(\frac{|A(A_1)|^2|A/A|}{N^2|A_1|}\right)$ translates of $b_0 A$, and $\delta A_{\delta}$ is contained in the union of this many translates of $-b_0 A$, we have

\begin{align*}
|A_1|^2&\ll \frac{|A-A||\alpha A_{\alpha}- \beta A_{\beta} - \gamma A_{\gamma} + \delta A_{\delta}|}{|A_1|}\\
&\ll \frac{|A-A||b_0A-b_0A-b_0A-b_0A||A(A+1)|^8|A/A|^4}{N^8|A_1|^5}\\
&= \frac{|A-A||A-A-A-A||A(A+1)|^8|A/A|^4}{N^8|A_1|^5}.
\end{align*}

Then by (\ref{eq:finite2}) we have

\begin{equation}\label{eq:finite5}
|A_1|^2\ll \frac{|A-A|^4|A(A+1)|^8|A/A|^4}{N^8|A_1|^5|A|^2}.
\end{equation}

Now by Lemma \ref{theorem:ruzsa} applied multiplicatively we know

\begin{equation}\label{eq:finite6}
|A/A|\ll \frac{|A(A+1)|^2}{|A|}.
\end{equation}

Applying (\ref{eq:finite1}) and (\ref{eq:finite6}) to (\ref{eq:finite5}) we get

\begin{align*}
|A_1|^2&\ll \frac{|A(A+1)|^{48}}{N^8|A_1|^5|A|^{34}}.
\end{align*}

Rearranging and applying (\ref{eq:finite3}) and (\ref{eq:finite4}), we obtain 

$$|A(A+1)|^{48}\gg |A_1|^7 N^8 |A|^{34} \gtrsim \frac{N |A|^{55}}{|A(A+1)|^7} \gtrsim \frac{|A|^{57}}{|A(A+1)|^8}$$

and so $|A(A+1)|\gtrsim |A|^{57/56}$ as required.

\subsubsection{$R(A_1)= \mathbb{F}_p$}

Let $E$ be the number of solutions to 

\begin{equation}\label{eq:energyeq}
a+\xi b = c+\xi d 
\end{equation}

with $a,b,c,d \in A_1$ and $\xi \in R(A_1)$. Moreover, for each $\xi \in R(A_1)$ let $E(A_1,\xi A_1)$ denote additive energy of $A_1$ and $\xi A_1$, i.e. the number of solutions to (\ref{eq:energyeq}) with $\xi$ fixed, so that $E=\sum_{\xi \in R(A_1)}E(A_1,\xi A_1)$.

There are no more than $|R(A_1)||A_1|^2=p|A_1|^2$ solutions to (\ref{eq:energyeq}) for which $(a,b)=(c,d)$. And there are no more than $|A_1|^4$ solutions with $(a,b)\neq(c,d)$. So we have

$$\sum_{\xi \in R(A_1)}E(A_1,\xi A_1)\leq |A_1|^4 +p|A_1|^2.$$

Since $|A_1|\leq |A|<p^{1/2}$ this gives

$$\sum_{\xi \in R(A_1)}E(A_1,\xi A_1)\ll p|A_1|^2.$$
  
So there exists $\xi = \frac{\alpha-\beta}{\gamma-\delta} \in R(A_1)$ such that 

$$E(A_1,\xi A_1)\ll |A_1|^2.$$

We then know that for any $A^* \subseteq A_1$ with $|A_*|\approx |A_1|$ we have also

$$E(A^*,\xi A^*)\ll |A_1|^2.$$
 
Now by Cauchy-Schwarz we know that $E(A^*,\xi A^*)\geq \frac{|A^*|^4}{|A^*-\xi A^*|}\approx \frac{|A_1|^4}{|A^*-\xi A^*|} $ and so we obtain

\begin{align*}
|\alpha A^*- \beta A^* - \gamma A^* + \delta A^*|&\geq |A^*- \xi A^*|\gg |A_1|^2
\end{align*}

As before we fix $A_*=A_{\alpha}\cap A_{\beta} \cap A_{\gamma} \cap A_{\delta}$, and find ourselves in the same position as in the previous case, but with one less factor of $|A-A|$ to deal with. So we obtain (and in fact exceed) the required bound. \qed

\section{The real case}\label{section:real}

In order to prove Theorem \ref{theorem:mainreal}, we make use of recent innovations which have made use of bounds on additive energies of higher order. For sets $A,B$, we write $$E_{\alpha}(A,B)=\sum_{x\in A/B}\left|A \cap x B^{-1}\right|^{\alpha}.$$

Note that $E_2(A,B)$ is simply the multiplicative energy of $A$ and $B$, i.e. the number of solutions to the equation $ab=a'b'$ with $a,a'\in A$ and $b,b' \in B$. In the case $A=B$ we write $E_{\alpha}(A)=E_{\alpha}(A,A)$.

Schoen and Shkredov \cite{SS} made particular use of the case where $\alpha=3$ in order to establish a bound on the size of the sum set of a `convex' set, improving an earlier result of Elekes, Nathanson and Ruzsa \cite{ENR}. Subsequently, Li and the second-named author \cite{LiORN2} improved on some other results from \cite{ENR}, including a proof that

\begin{equation}
|A+f(A)|\gtrsim{|A|^{24/19}}
\label{convex}
\end{equation}

for any convex function $f$.

The upcoming proof follows a similar structure to the proof of (\ref{convex}), essentially improving on an application of the Szemer\'{e}di-Trotter by considering third order energy. In the same spirit, Rudnev \cite{mishaSP2} recently proved some improved sum-product estimates.

We make particular use of the following lemma of Li \cite{Liconvex}.

\begin{lemma}\label{theorem:li}
For any finite $A,B \subseteq \mathbb{R}$ such that $0\notin{A,B}$,
$$E_{1.5}(A)^2|B|^2 \leq E_2(A,AB)E_3(A)^{2/3}E_3(B)^{1/3}.$$
\end{lemma}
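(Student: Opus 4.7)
The plan is to prove this via a judicious application of H\"older's inequality with three factors. The key identity, obtained by writing $r_{AA}(x)^{3/2}=r_{AA}(x)\cdot r_{AA}(x)^{1/2}$ and expanding one copy of $r_{AA}(x)$ as a sum over its representations, is
\[
E_{1.5}(A)=\sum_x r_{AA}(x)^{3/2}=\sum_{(a_1,a_2)\in A\times A}r_{AA}(a_1a_2)^{1/2}.
\]
Multiplying by $|B|$ gives
\[
E_{1.5}(A)\cdot|B|=\sum_{(a_1,a_2,b)\in A\times A\times B}r_{AA}(a_1a_2)^{1/2},
\]
and the LHS of the lemma is the square of this sum.

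The central step is to apply H\"older's inequality with exponents $(2,3,6)$, which satisfy $\tfrac{1}{2}+\tfrac{1}{3}+\tfrac{1}{6}=1$. To accommodate an $\ell^6$ factor that produces $E_3(B)$ rather than a weaker expression in $|B|$ alone, I would first introduce an auxiliary variable $b'\in B$ so that the sum runs over $A\times A\times B\times B$. One then decomposes the integrand $r_{AA}(a_1a_2)^{1/2}$ as a product $f\cdot g\cdot h$ where: $f$ is a normalized power of $r_{A,AB}(a_1a_2b)$ chosen so that $\|f\|_2^2=E_2(A,AB)$; $g$ is a power of $r_{AA}(a_1a_2)$ chosen so that $\|g\|_3^3=E_3(A)$; and $h$ is a power of $r_{BB}(bb')$ chosen so that $\|h\|_6^6=E_3(B)$. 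H\"older's inequality then yields the required bound, and squaring completes the proof.

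The main obstacle lies in calibrating this three-factor decomposition: the factors must both multiply back to $r_{AA}(a_1a_2)^{1/2}$ up to a normalizing constant, and simultaneously collapse under summation over the four-variable index set to the three target energies. This requires careful bookkeeping of the multiplicities coming from $r_{AB}(a_2b)$ in $AB$ and $r_{BB}(bb')$ in $BB$, as well as the cancellation of extraneous factors of $|A|$ and $|B|$ introduced by the auxiliary variable. Once this decomposition is pinned down, the proof reduces to a direct application of H\"older.
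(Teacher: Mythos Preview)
The paper does not actually prove this lemma: it simply cites Li \cite{Liconvex} and remarks that the additive statement there transfers verbatim to the multiplicative setting. So there is no ``paper's own proof'' to compare against beyond that citation.

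Your outline does capture the skeleton of Li's argument: the identity $E_{1.5}(A)=\sum_{a_1,a_2}r_{AA}(a_1a_2)^{1/2}$ is the right starting point, and H\"older with exponents $(2,3,6)$ is exactly the mechanism that produces the exponents $1,\tfrac{2}{3},\tfrac{1}{3}$ on the right-hand side. In that sense the plan is sound.

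However, what you have written is a sketch that explicitly defers the entire content of the proof. You yourself flag the ``main obstacle'' as calibrating the three-factor decomposition, and then assert that once it is pinned down the rest is routine---but pinning it down \emph{is} the proof. Two concrete issues your sketch does not address: first, to get $\|f\|_2^2=E_2(A,AB)$ when summing over $(a_1,a_2,b)$ (or over $(a_1,a_2,b,b')$), one must compensate for the multiplicity $r_{AB}(a_2b)$, since many pairs $(a_2,b)$ collapse to the same element of $AB$; a bare ``normalized power of $r_{A,AB}(a_1a_2b)$'' will overshoot $E_2(A,AB)$ by this weight. Second, the factors you propose cannot multiply back to $r_{AA}(a_1a_2)^{1/2}$ and simultaneously hit all three target energies without invoking the pointwise inequality $r_{AA}(a_1a_2)\le r_{A,AB}(a_1a_2b)$ (valid because every representation $a_1a_2=a_1'a_2'$ yields $a_1(a_2b)=a_1'(a_2'b)$ with $a_2'b\in AB$); this inequality is what creates the slack needed to close the bookkeeping, and it is absent from your proposal. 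Until those two points are handled, the argument does not go through.
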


Note that the statement of Lemma \ref{theorem:li} presented in \cite{Liconvex} concerns additive rather than multiplicative energy, but the proof is identical after substituting multiplication for addition, taking care to avoid dividing by zero. Indeed, to avoid the problem of division by zero, we will assume throughout the proof of Theorem \ref{theorem:mainreal} that $-1,0,1\notin{A}$.

\subsection{Bounding the multiplicity of particular product sets}

We shall exploit Lemma \ref{theorem:li} for the sets $A$ and $A+1$. To this end we wish to control the multiplicities of several product sets in terms of the size of the set $A(A+1)$. For this we prove the following lemma:

\begin{lemma}\label{theorem:control}
Given finite $A,B \subseteq \mathbb{R}$ and a parameter $1 \leq t \leq |A|,|B|$, write $S_t(A,B)$ for the set of $s \in AB$ such that $\left|A \cap sB^{-1}\right|\geq t$. Then we have
$$\left|S_t(A,B)\right|\ll \frac{|A(A+1)|^2|B|^2}{|A|t^3}  $$
\end{lemma}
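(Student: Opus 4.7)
The target rearranges to $|A| \cdot |S_t(A,B)| \cdot t^3 \ll (|A(A+1)| \cdot |B|)^2$, which has exactly the shape of a Szemer\'edi--Trotter conclusion for a family of $|A| \cdot |S_t|$ lines, each incident to at least $t$ points of a set of size $|A(A+1)| \cdot |B|$. I would therefore try to realise such a configuration, with point set $P := A(A+1) \times B$. For each pair $(a', s) \in A \times S_t(A,B)$, each representation $s = ab$ with $a \in A$, $b \in B$ supplies a point $(a(a'+1), b) \in P$, and the (at least) $t$ resulting points all satisfy $xy = (a'+1)s$, so they lie on a common hyperbola; distinct pairs $(a', s)$ and $(a'', s')$ with $(a'+1)s = (a''+1)s'$ yield the same hyperbola, but these coincidences are handled by dyadic pigeonholing costing only a logarithmic factor.

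Since the hyperbolas $xy = c$ form a one-parameter family of pairwise disjoint curves, the standard Pach--Sharir incidence bounds are trivial here, and to unlock the $t^{-3}$ saving I would sidestep the incidence geometry in favour of a direct combinatorial injection. Let $T := \{(s, a_1, a_2, a_3, a') : s \in S_t,\ a_1, a_2, a_3 \in A \cap sB^{-1},\ a' \in A\}$, so that $|T| \ge |A| \cdot |S_t| \cdot t^3$, and define
\[
\phi(s, a_1, a_2, a_3, a') := \bigl(a_1(a'+1),\ a_2(a_3+1),\ s/a_1,\ s/a_3\bigr) \in A(A+1)^2 \times B^2.
\]
Given the image $(x_1, x_2, \beta_1, \beta_3)$, the relations $s = a_1 \beta_1 = a_3 \beta_3$ force $a_3 = (\beta_1/\beta_3) a_1$; the equation $x_1 = a_1(a'+1)$ determines $a'$ in terms of $a_1$; the equation $x_2 = a_2(a_3+1)$ determines $a_2$; and the requirement $s/a_2 \in B$ yields a quadratic equation in $a_1$.

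The heart of the technical work is showing this map has $O(1)$-bounded multiplicity. A naive count gives only preimage $\le 2|B|$ (two quadratic roots per choice of $b_2 = s/a_2 \in B$), which yields the weaker bound $|S_t| \ll |A(A+1)|^2 |B|^3 / (|A| t^3)$. To recover the stated bound one must save a further factor of $|B|$: for instance by showing that only $O(1)$ values of $b_2 \in B$ can yield a quadratic root that actually lies in $A$, via an independent Szemer\'edi--Trotter-type argument on the point--line configuration implicit in the quadratic; or by identifying a non-trivial algebraic constraint forced on the image $(x_1, x_2, \beta_1, \beta_3)$ that confines it to a subset of $A(A+1)^2 \times B^2$ of size $(|A(A+1)| \cdot |B|)^2 / |A|$. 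Handling degenerate configurations (elements of $A$ coinciding, or $-1, 0, 1$ appearing) also requires care, and is presumably why the paper sets aside $-1,0,1$ at the outset.
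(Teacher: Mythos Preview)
Your proposal does not prove the lemma: you correctly identify the Szemer\'edi--Trotter shape of the target inequality, but your first attempt yields hyperbolas $xy=c$ which form a pencil of pairwise-disjoint curves and so give no nontrivial incidence bound, and your fallback injection $\phi$ only establishes $|S_t|\ll |A(A+1)|^2|B|^3/(|A|t^3)$, off by a full factor of $|B|$. The suggestions for recovering that factor---an auxiliary Szemer\'edi--Trotter argument on the quadratic roots, or an unspecified algebraic constraint cutting the image down by $|A|$---are not carried out and it is not clear they can be.

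The missing idea is a parameterisation that produces genuine lines. The paper swaps the roles you assigned: it takes \emph{lines} indexed by $A(A+1)\times B$, namely $l_{\alpha b}:\ y=(\alpha x-1)b$ for $\alpha\in A(A+1)$ and $b\in B$, and \emph{points} indexed by $A\times S_t$, namely $(1/a,s)$. If $s\in S_t(A,B)$ has representations $s=a_ib_i$ for $i=1,\dots,t$, then for any $a\in A$ one has $s=\bigl(\tfrac{a(a_i+1)}{a}-1\bigr)b_i$, so $(1/a,s)$ lies on the line $l_{a(a_i+1),\,b_i}$; since the $b_i$ are distinct these are $t$ distinct lines. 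Thus the $|A|\,|S_t|$ points $(1/a,s)$ are each $t$-rich for a family of $|A(A+1)|\,|B|$ honest lines, and the Szemer\'edi--Trotter corollary gives $|A|\,|S_t|\ll |A(A+1)|^2|B|^2/t^3$ directly. Your map $(a',s)\mapsto\{xy=(a'+1)s\}$ collapsed the two-parameter line family to a one-parameter pencil; the paper's device of writing $a_i=\alpha/a-1$ with $\alpha=a(a_i+1)\in A(A+1)$ is precisely what linearises the configuration.
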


Before proving Lemma \ref{theorem:control} we remark that it depends on the Szemer\'edi-Trotter \cite{ST} incidence theorem for points and lines. 

\begin{theorem}[Szemer\'edi-Trotter]\label{theorem:ST}
Let $P$ and $L$ be a set of points and lines respectively in $\mathbb{R}^2$. Then the number $I(P,L)$ of incidences between points in $P$ and lines in $L$ is $O\left(|P|^{2/3}|L|^{2/3}+|P|+|L|\right)$. 
\end{theorem}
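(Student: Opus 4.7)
The plan is to apply the Szemer\'edi--Trotter theorem (Theorem \ref{theorem:ST}) to a point-line configuration in which $S_t(A,B)$ is encoded as a set of lines, each carrying many incidences. For each $s \in S_t(A,B)$ write $A_s := A \cap sB^{-1}$, so $|A_s| \geq t$ by definition, and for $a \in A_s$ put $b := s/a \in B$. I take as point set $P := A(A+1) \times B^{-1}$, of cardinality at most $|A(A+1)|\,|B|$, and for each $(s, a') \in S_t(A,B) \times A$ the line
$$\ell_{s, a'}: \quad u = (a's)\,v + a'$$
in the $(u,v)$-plane.

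The key calculation is that for each $a \in A_s$ the pair $(u, v) := (a'(a+1),\, a/s)$ lies in $P$ (since $a', a \in A$ gives $u \in A(A+1)$, and $a/s = 1/b \in B^{-1}$) and satisfies $(a's)(a/s) + a' = a'a + a' = u$, so it is incident to $\ell_{s, a'}$. Different $a \in A_s$ yield different second coordinates, so each line carries at least $|A_s| \geq t$ incidences. The lines are pairwise distinct since the intercept of $\ell_{s, a'}$ is $a'$ and its slope is $a's$, so $(s, a')$ can be read off (using $0 \notin A$). This produces $|L| = |A|\,|S_t(A,B)|$ distinct lines each meeting $P$ in at least $t$ points, and Theorem \ref{theorem:ST} gives
$$t\,|A|\,|S_t(A,B)| \;\leq\; I(P, L) \;\ll\; \bigl(|A(A+1)|\,|B|\bigr)^{2/3}\bigl(|A|\,|S_t(A,B)|\bigr)^{2/3} + |A(A+1)|\,|B| + |A|\,|S_t(A,B)|.$$

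Rearranging the main (first) term yields $|S_t(A,B)| \ll |A(A+1)|^2|B|^2/(|A|t^3)$, as required. The two lower-order cases are absorbed by trivial bounds: if $|A(A+1)|\,|B|$ dominates then $|S_t(A,B)| \ll |A(A+1)|\,|B|/(t|A|)$, which is at most the target because $t^2 \leq |A(A+1)|\,|B|$ (using $t \leq |A|, |B|$ and $|A| \leq |A(A+1)|$); the $|L|$-dominated case forces $t = O(1)$ and is handled similarly using $|S_t(A,B)| \leq |AB|$. The main obstacle is to find coordinates in which the configuration is genuinely linear: the naive choice of encoding points as $(a'(a+1), b)$ produces the hyperbolic relation $b(u - a') = sa'$, which cannot be fed into Szemer\'edi--Trotter, so I replace $b$ by $v = 1/b$, which linearizes the equation to $u = (a's)v + a'$ without changing the size of the point set.
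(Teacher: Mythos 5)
As a proof of Theorem \ref{theorem:ST} your argument is circular: the very first step is to \emph{apply} the Szemer\'edi--Trotter theorem, so nothing here establishes the incidence bound itself. (The paper does not prove Theorem \ref{theorem:ST} either; it is quoted from \cite{ST} as a known result.) What you have actually written is a proof of Lemma \ref{theorem:control}, and judged as such it is correct and is essentially the dual of the paper's argument. The paper takes the family indexed by $A(A+1)\times B$ as lines $y=(\alpha x-1)b$ and the pairs $\left(\frac{1}{a},s\right)$ with $(s,a)\in S_t(A,B)\times A$ as $t$-rich points, then applies the ``rich points'' corollary of Szemer\'edi--Trotter; you interchange the roles, taking $A(A+1)\times B^{-1}$ as the point set and $S_t(A,B)\times A$ as a family of $t$-rich lines $u=(a's)v+a'$, and apply the incidence bound directly. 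Since the Szemer\'edi--Trotter bound is symmetric in $|P|$ and $|L|$, the two formulations give identical numerology, and your linearising substitution $v=1/b$ plays exactly the role of the paper's substitution $x=1/a$. Your verification of distinctness of points on a line and of the lines themselves is fine, and your absorption of the two lower-order terms (via $t^2\le |A(A+1)||B|$ and $|S_t(A,B)|\le |A||B|$) matches the step the paper carries out when discarding $|L|/t$ against $|L|^2/t^3$. Like the paper's own proof, yours needs $0\notin A$ (to recover $a'$ from the intercept and $s$ from the slope) and $0\notin AB$, which is covered by the standing assumption $-1,0,1\notin A$ made at the start of Section \ref{section:real}. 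So the content is sound, but it answers a different question: it is a proof of Lemma \ref{theorem:control} assuming Theorem \ref{theorem:ST}, not a proof of Theorem \ref{theorem:ST}.
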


This has the following standard consequence, which we shall make use of.

\begin{corollary}
Let $L$ be a set of lines in $\mathbb{R}^2$. Then the number of points incident to at least $k$ lines in $L$ is $O\left(\frac{|L|^2}{k^3}+\frac{|L|}{k}\right)$.
\end{corollary}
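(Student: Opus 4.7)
The plan is to apply Theorem~\ref{theorem:ST} directly to the incidence configuration consisting of $L$ together with the set $P$ of points that lie on at least $k$ lines of $L$. By the defining property of $P$, each such point contributes at least $k$ incidences to $I(P,L)$, so $k|P| \leq I(P,L)$, while Szemer\'edi--Trotter provides the upper bound $I(P,L) \leq c\bigl(|P|^{2/3}|L|^{2/3}+|P|+|L|\bigr)$ for some absolute constant $c$. Combining the two yields
$$k|P| \;\leq\; c\bigl(|P|^{2/3}|L|^{2/3} + |P| + |L|\bigr),$$
and from here the argument is a three-case analysis governed by which term on the right dominates.

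If the $|P|^{2/3}|L|^{2/3}$ term dominates, rearranging $k|P| \ll |P|^{2/3}|L|^{2/3}$ immediately gives $|P| \ll |L|^2/k^3$; if the $|L|$ term dominates, $k|P| \ll |L|$ yields $|P| \ll |L|/k$. These two outcomes are precisely the two terms in the claimed bound.

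The only subtlety lies in the case where the middle term $|P|$ dominates: there $k|P| \leq 3c|P|$ forces $k \leq 3c$, so $k$ is bounded by an absolute constant. In this bounded-$k$ regime Szemer\'edi--Trotter alone is not sharp enough, and I would instead fall back on a trivial double-counting: each point of $P$ is the common intersection of at least $\binom{k}{2}$ pairs of lines from $L$, while there are only $\binom{|L|}{2}$ such pairs in total, so $|P| \ll |L|^2/k^2 \ll |L|^2/k^3$, the final step using that $k = O(1)$. This again lands inside the stated bound and completes the argument. There is no real obstacle here; the proof is essentially bookkeeping to ensure all three regimes are covered once the Szemer\'edi--Trotter estimate is in hand.
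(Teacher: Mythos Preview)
Your proposal is correct and follows essentially the same approach as the paper: define $P$ as the set of $k$-rich points, observe $k|P|\le I(P,L)$, and compare with the Szemer\'edi--Trotter upper bound. The paper simply asserts the comparison in one line, whereas you spell out the three-term case analysis (and handle the $k=O(1)$ regime via the elementary pair-counting argument); this extra care is fine but not a genuinely different route.
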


\begin{proof}
Let $P$ be the set of points incident to at least $k$ lines in $L$. Then $|P|k\leq I(P,L)$. Comparing to the upper bound in the Szemer\'edi-Trotter theorem shows that $|P|\ll \frac{|L|^2}{k^3}+\frac{|L|}{k}$ as required. 
\end{proof}

We now prove Lemma \ref{theorem:control}. Let $L=\left\{l_{\alpha b}: \alpha \in A(A+1),b \in B\right\}$, where $l_{\alpha b}$ is the line given by $y=(\alpha x -1)b$. For each $t \leq |A|,|B|$ we let $P_t$ be the set of points incident to at least $t$ lines in $L$. By the corollary to Szemer\'edi-Trotter we know that $|P_t|\ll \frac{|L|^2}{t^3}+\frac{|L|}{t}$. Since $t \leq |A|,|B|$ and $|L|=|A(A+1)||B|$ we therefore have

\begin{equation}\label{eq:upper}
|P_t|\ll \frac{|A(A+1)|^2|B|^2}{t^3}
\end{equation}

We now bound $|P_t|$ from below by $\left|S_t(A,B)\right||A|$. Suppose that $(s,a) \in S_t(A,B)\times A$. Since $s \in S_t(A,B)$ there are at least $t$ solutions to the equation $s=ab$ with $a \in A$ and $b \in B$. Index the solution pairs as $(a_i,b_i)$ for $i=1,2,\ldots,t$. For any $a \in A$ and any $1\leq i \leq t$ we have

\begin{align*}
s&=a_ib_i= \left(\frac{\alpha(i,a)}{a}-1\right)b_i
\end{align*}

where $\alpha(i,a)=a(a_i+1)$. This is the same as saying that 

$$\left(\frac{1}{a},s\right)\in l_{\alpha(i,a),b_i}.$$

One can verify that the lines $l_{\alpha(i,a),b_i}$ are all distinct as $i$ ranges from $1$ to $t$. So we deduce that $\left(\frac{1}{a},s\right)\in P_t$ for each $(s,a)\in S_t(A,B)\times A$ and conclude that

\begin{equation}\label{eq:lower}
|P_t|\geq \left|S_t(A,B)\right||A|.
\end{equation} 

Comparing (\ref{eq:lower}) and (\ref{eq:upper}) yields $\left|S_t(A,B)\right|\ll \frac{|A(A+1)|^2|B|^2}{|A|t^3}$
as required. \qed

\subsection{Consequences of bounded multiplicity}

We now use our control of multiplicities, along with Lemma \ref{theorem:li}, to obtain bounds on various forms of multiplicative energy.

\begin{corollary}\label{theorem:2to1.5} $\frac{|A|^{11}}{|A(A+1)|^5}\ll E_{1.5}(A)E_{1.5}(A+1)$.
\end{corollary}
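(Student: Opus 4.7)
The plan is to combine Lemma~\ref{theorem:control} with Cauchy--Schwarz and the log-convexity of $L^p$ norms (a Hölder interpolation). In outline: Lemma~\ref{theorem:control} will give upper bounds on $E_3(A)$ and $E_3(A+1)$; Cauchy--Schwarz will lower-bound $E_2(A)E_2(A+1)$ in terms of $|A(A+1)|$; and the interpolation $E_2 \leq E_{1.5}^{2/3}E_3^{1/3}$ will let us pass from $E_2/E_3$ to $E_{1.5}$.

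For the first step I would apply Lemma~\ref{theorem:control} with the second set $B=A$ and decompose $E_3(A) = \sum_z r(z)^3$ dyadically over the level sets $S_{2^k}(A,A)$, giving $E_3(A) \lesssim |A(A+1)|^2|A|$. The same bound for $E_3(A+1)$ is the main obstacle, since a direct application of Lemma~\ref{theorem:control} to $A+1$ would replace $|A(A+1)|$ by $|(A+1)(A+2)|$. To get around this I would use the reflection $B = -A-1$: then $B+1 = -A$, hence $B(B+1) = \{(-a-1)(-b) : a,b \in A\} = A(A+1)$, and the bijection $a \mapsto -a-1$ identifies the multiplicative representation functions of $B$ with those of $A+1$, so $E_3(B) = E_3(A+1)$. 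Applying Lemma~\ref{theorem:control} to this $B$ then yields $E_3(A+1) \lesssim |A(A+1)|^2|A|$.

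For the second step I would study the mixed energy $E_2(A,A+1) = \sum_s r(s)^2$, where $r(s) = |\{(a,b) \in A^2 : a(b+1) = s\}|$ is supported on $A(A+1)$ with $\sum_s r(s) = |A|^2$. Cauchy--Schwarz against the support gives $E_2(A,A+1) \geq |A|^4 / |A(A+1)|$. Rewriting the equation $a(b+1) = a'(b'+1)$ as $a/a' = (b'+1)/(b+1)$ presents $E_2(A,A+1)$ as $\sum_x \phi(x)\psi(x)$, where $\phi$ and $\psi$ are the representation functions of $A/A$ and $(A+1)/(A+1)$; a further Cauchy--Schwarz then yields $E_2(A,A+1) \leq (E_2(A) E_2(A+1))^{1/2}$. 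Combining these gives $E_2(A)E_2(A+1) \geq |A|^8 / |A(A+1)|^2$.

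Finally, the Hölder interpolation $E_2 \leq E_{1.5}^{2/3} E_3^{1/3}$ rearranges to $E_{1.5} \geq E_2^{3/2}/E_3^{1/2}$. Applying this to both $A$ and $A+1$ and substituting the estimates from the previous steps yields
$$E_{1.5}(A)E_{1.5}(A+1) \geq \frac{(E_2(A)E_2(A+1))^{3/2}}{(E_3(A)E_3(A+1))^{1/2}} \gtrsim \frac{|A|^{12}/|A(A+1)|^3}{|A(A+1)|^2|A|} = \frac{|A|^{11}}{|A(A+1)|^5},$$
which is the required bound.
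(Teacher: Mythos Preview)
Your argument is correct and shares the same skeleton as the paper's: both use Cauchy--Schwarz to get $E_2(A)E_2(A+1)\geq |A|^8/|A(A+1)|^2$, both use the reflection $A\mapsto -A-1$ to transfer bounds from $A$ to $A+1$, and both ultimately rely on Lemma~\ref{theorem:control}. The one genuine difference is how you pass from $E_2$ to $E_{1.5}$. The paper splits $E_2(A)$ at a threshold $\Delta$, bounding the small-multiplicity part by $\Delta^{1/2}E_{1.5}(A)$ and the large-multiplicity tail by $|A(A+1)|^2|A|/\Delta$ via Lemma~\ref{theorem:control}, then optimises $\Delta$ to obtain $E_2(A)\ll |A(A+1)|^{2/3}|A|^{1/3}E_{1.5}(A)^{2/3}$ directly. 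You instead route through $E_3$: first establish $E_3(A)\lesssim |A(A+1)|^2|A|$ (this is exactly Corollary~\ref{theorem:3to0}), then apply the H\"older interpolation $E_2\leq E_{1.5}^{2/3}E_3^{1/3}$. Substituting your $E_3$ bound into H\"older reproduces the paper's $E_2$ bound, so the two routes are numerically equivalent up to one caveat: the dyadic sum for $E_3$ has no decay and therefore costs a logarithm, so your conclusion is $\frac{|A|^{11}}{|A(A+1)|^5}\lesssim E_{1.5}(A)E_{1.5}(A+1)$ rather than the stated $\ll$. The paper's direct threshold argument avoids this because the tail sum for $E_2$ is geometric. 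The log loss is harmless for the eventual application to Theorem~\ref{theorem:mainreal}, but it is worth noting that your packaging, while cleaner, is marginally weaker as a proof of the corollary exactly as stated.
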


\begin{proof}
By Cauchy-Schwarz we have

\begin{equation}\label{eq:cs}
\frac{|A|^4}{|A(A+1)|}\leq E_2(A,A+1) \leq E_2(A)^{1/2}E_2(A+1)^{1/2}.
\end{equation}

Let $\Delta\geq 1$ be a parameter that will be fixed later. Write $\mu(x)=\left|A\cap x A^{-1}\right|$. Lemma \ref{theorem:control} implies that

\begin{align*}
E_2(A)&=\sum_{\mu(x) \leq \Delta}\mu(x)^2+\sum_{\mu(x) > \Delta}\mu(x)^2\\
&\ll \Delta^{1/2} E_{1.5}(A) + \sum_{j=0}^{\log_2 |A|}\left|S_{2^j\Delta}(A,A)\right|\left({2^j}\Delta\right)^2\\
&\ll \Delta^{1/2} E_{1.5}(A) + \frac{|A(A+1)|^2|A|}{\Delta}.
\end{align*} 

We then set $\Delta=\frac{|A(A+1)|^{4/3}|A|^{2/3}}{E_{1.5}(A)^{2/3}}$ to obtain 

\begin{equation}\label{eq:1}
E_2(A)\ll |A(A+1)|^{2/3}|A|^{1/3}E_{1.5}^{2/3}(A).
\end{equation}

Applying (\ref{eq:1}) to the set $-A-1$ yields

\begin{align}
\nonumber E_2(A+1)&=E_2(-A-1)\\
& \nonumber \ll \left|(-A-1)(-A)\right|^{2/3}|A|^{1/3}E_{1.5}^{2/3}(-A-1)\\
& \label{eq:2} =\left|A(A+1)\right|^{2/3}|A|^{1/3}E_{1.5}^{2/3}(A+1).
\end{align}

Combining (\ref{eq:cs}),(\ref{eq:1}) and (\ref{eq:2}) gives the required result.
\end{proof}

\begin{corollary}\label{theorem:3to0} $E_3(A),E_3(A+1) \lesssim |A(A+1)|^2|A|.$
\begin{proof}
By Lemma \ref{theorem:control} we have 

\begin{align*}
E_3(A)&\leq \sum_{j=0}^{\log_2 |A|}\left|S_{2^j}(A,A)\right|2^{3j}\\
&\ll \sum_{j=0}^{\log_2 |A|} \left|A(A+1)\right|^2|A|\\
&\lesssim |A(A+1)|^2|A|
\end{align*}

which gives the first bound. The proof of the second bound follows by applying this result to the set $-A-1$.
\end{proof}
\end{corollary}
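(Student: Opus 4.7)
The plan is to run a standard dyadic decomposition of the third moment against the multiplicity bound from Lemma \ref{theorem:control}. Writing $\mu(x)=|A\cap xA^{-1}|$, partition $A/A$ according to the dyadic scale of $\mu$:
\begin{equation*}
E_3(A) = \sum_{x\in A/A}\mu(x)^3 \ll \sum_{j=0}^{\log_2|A|}|S_{2^j}(A,A)|\cdot 2^{3j},
\end{equation*}
since each $x$ with $2^j\leq \mu(x)<2^{j+1}$ belongs to $S_{2^j}(A,A)$ and contributes at most $(2^{j+1})^3$. Then I would feed in Lemma \ref{theorem:control} with $B=A$ and $t=2^j$, which gives $|S_{2^j}(A,A)|\ll |A(A+1)|^2|A|/2^{3j}$, so that the factors of $2^{3j}$ cancel and each dyadic level contributes $O(|A(A+1)|^2|A|)$. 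Summing over the $O(\log|A|)$ levels yields $E_3(A)\lesssim |A(A+1)|^2|A|$.

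For the $E_3(A+1)$ bound, the trick is to apply the same argument to the set $-A-1$ rather than $A+1$. Since the map $t\mapsto -t$ is a multiplicative bijection preserving solution counts to $ab=cd$, we have $E_3(A+1)=E_3(-A-1)$. Using Lemma \ref{theorem:control} with both sets equal to $-A-1$, the ambient product set appearing on the right is
\begin{equation*}
(-A-1)\bigl((-A-1)+1\bigr) = (-A-1)(-A) = A(A+1),
\end{equation*}
so the very same bound $|S_t(-A-1,-A-1)|\ll |A(A+1)|^2|A|/t^3$ holds, and the dyadic sum repeats verbatim.

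There is no real obstacle here; once Lemma \ref{theorem:control} is in hand, the dyadic decomposition is routine. The only point worth verifying carefully is the choice to work with $-A-1$ instead of $A+1$, which keeps $A(A+1)$ (rather than $(A+1)(A+2)$) as the relevant product set on the right-hand side.
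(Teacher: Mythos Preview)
Your proposal is correct and follows the paper's own proof essentially line for line: the same dyadic decomposition of $E_3(A)$ against Lemma~\ref{theorem:control} with $B=A$, and the same $-A-1$ substitution to handle $E_3(A+1)$. Your added verification that $(-A-1)\bigl((-A-1)+1\bigr)=A(A+1)$ makes explicit the point the paper leaves implicit.
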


\begin{corollary}\label{theorem:2to0} $$E_2(A,A(A+1))\ll |A(A+1)|^{5/2}$$ and $$E_2(A+1,A(A+1))\ll |A(A+1)|^{5/2}.$$
\end{corollary}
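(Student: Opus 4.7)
The plan is to bound $E_2(A, A(A+1))$ by the same dyadic recipe used to prove Corollary \ref{theorem:2to1.5}, except that the decomposition is performed over the multiplicities of the product set $A \cdot A(A+1)$ rather than $A \cdot A$. Write $r(s) := |A \cap s \cdot (A(A+1))^{-1}|$, so that $E_2(A, A(A+1)) = \sum_s r(s)^2$ and $\sum_s r(s) = |A| \cdot |A(A+1)|$. The crucial input is Lemma \ref{theorem:control} applied with $B = A(A+1)$: since the $|A(A+1)|^2$ factor in the lemma depends only on the first argument, this substitution is benign and gives
$$|S_t(A, A(A+1))| \ll \frac{|A(A+1)|^4}{|A|\,t^3}.$$

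Now fix a threshold $\Delta$ (to be chosen) and split the energy at multiplicity $\Delta$. For $r(s) \le \Delta$, use $r(s)^2 \le \Delta\, r(s)$ and sum to obtain at most $\Delta |A| |A(A+1)|$. For $r(s) > \Delta$, perform a dyadic decomposition across the levels $r(s) \in [2^j, 2^{j+1})$ for $2^j > \Delta$; applying the displayed estimate for each $|S_{2^j}(A,A(A+1))|$ and summing the resulting geometric series in $2^{-j}$ gives a bound of $O\bigl(|A(A+1)|^4 / (|A|\Delta)\bigr)$. Setting $\Delta = |A(A+1)|^{3/2}/|A|$ balances the two contributions at $|A(A+1)|^{5/2}$, which is the desired bound. (In the degenerate regime where this $\Delta$ exceeds $|A|$, i.e.\ $|A(A+1)| > |A|^{4/3}$, the trivial estimate $E_2(A, A(A+1)) \le |A|^2 |A(A+1)|$ is already smaller than $|A(A+1)|^{5/2}$ and finishes the proof.)

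For the second inequality, the key observation is that multiplicative energy is invariant under negation of either argument, so $E_2(A+1, A(A+1)) = E_2(-A-1, A(A+1))$. Apply Lemma \ref{theorem:control} with the first argument replaced by $-A-1$ and $B = A(A+1)$: since $(-A-1)\bigl((-A-1)+1\bigr) = (-A-1)(-A)$, and this latter set equals $A(A+1)$ elementwise, we again recover the bound $|S_t(-A-1, A(A+1))| \ll |A(A+1)|^4/(|A|\,t^3)$. Running the same dyadic split as above then yields $E_2(A+1, A(A+1)) \ll |A(A+1)|^{5/2}$. The only subtle point in the whole argument is this negation trick: a direct application of Lemma \ref{theorem:control} with first argument $A+1$ would produce an unmanageable factor $|(A+1)(A+2)|$, whereas negating first matches the relevant product sets perfectly.
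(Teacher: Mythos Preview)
Your argument is correct and follows essentially the same route as the paper: apply Lemma \ref{theorem:control} with $B=A(A+1)$, split the energy at a threshold $\Delta$, and balance at $\Delta=|A(A+1)|^{3/2}/|A|$; the second bound is obtained via the same negation trick $E_2(A+1,A(A+1))=E_2(-A-1,A(A+1))$ together with $(-A-1)(-A)=A(A+1)$. Your explicit handling of the degenerate regime $\Delta>|A|$ (where the hypothesis $t\le|A|$ of Lemma \ref{theorem:control} would fail) is a welcome addition that the paper leaves implicit.
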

\begin{proof}
Let $\Delta\geq 1$ be a parameter that will be fixed later. Write $\mu(x)=\left|A(A+1)\cap x A^{-1}\right|$ By Lemma \ref{theorem:control} we have

\begin{align*}
E_2(A,A(A+1))&\approx \sum_{\mu(x) \leq \Delta}\mu(x)^2+\sum_{\mu(x)>\Delta}\mu(x)^2\\
&\ll |A||A(A+1)|\Delta + \sum_{j=0}^{\log |A|}\left|S_{2^j\Delta}(A,A(A+1))\right|\left(2^j\Delta\right)^2\\
&\ll |A||A(A+1)|\Delta + \frac{|A(A+1)|^4}{|A|\Delta}.
\end{align*}

We then pick $\Delta=\frac{|A(A+1)|^{3/2}}{|A|}$ to obtain $E_2(A,A(A+1))\ll |A(A+1)|^{5/2}.$ The second bound follows by applying this to the set $-A-1$ and noting that $E_2(-A-1,A(A+1))=E_2(A+1,A(A+1))$.
\end{proof}

\subsection{Completing the proof of Theorem \ref{theorem:mainreal}}
Combining Lemma \ref{theorem:li} and Corollary \ref{theorem:2to1.5} shows that 

\begin{align*}
\frac{|A|^{11}}{|A(A+1)|^5}&\ll E_{1.5}(A)E_{1.5}(A+1)\\
&\ll \frac{E_2(A,A(A+1))^{1/2}E_2(A+1,A(A+1))^{1/2}E_3(A)^{1/2}E_3(A+1)^{1/2}}{|A|^2}.
\end{align*}
 
Applying Corollaries \ref{theorem:3to0} and \ref{theorem:2to0} to bound $E_2(A,A(A+1))$, $E_2(A,A(A+1))$, $E_3(A)$ and $E_3(A+1)$ then yields
$|A|^{24}\ll |A(A+1)|^{19}$ as required. \qed

\section*{Acknowledgements}
The authors are grateful to Misha Rudnev for encouragement and helpful conversations.

\bibliographystyle{plain}
\bibliography{expandersbib}

\end{document}